\DeclarePairedDelimiter{\oldnormaux}{\bracevert}{\bracevert}
\NewDocumentCommand{\oldnorm}{som}{%
  \IfBooleanTF{#1}
    {\oldnormaux*{#3}}
    {\IfNoValueTF{#2}
       {\oldnormaux*{\vphantom{dq}#3}}
       {\oldnormaux[#2]{#3}}%
    }%
}
\theoremstyle{plain}
\newtheorem{theorem}{\bf Theorem}[section]
\newtheorem{corollary}[theorem]{\bf Corollary}
\newtheorem{proposition}[theorem]{\bf Proposition}
\def\l{\lambda}
\def\t{\tau}
\pgfplotsset{compat = newest}
\numberwithin{equation}{section}
\let\ge\geqslant
\let\le\leqslant
\let\geq\geqslant
\let\leq\leqslant
\newcommand{\ca}{\begin{cases}}
\newcommand{\ac}{\end{cases}}
\newcommand{\ma}{\begin{pmatrix}}
\newcommand{\am}{\end{pmatrix}}
\renewcommand{\[}{\begin{equation}}
\renewcommand{\]}{\end{equation}}
\def\eq{\begin{equation}}
\def\qe{\end{equation}}
\def\[{\begin{equation}}
\title[Discrete Spectrum of the Bilayer Graphene Operator]{Discrete Spectrum of the Bilayer Graphene Operator}
\author{Siyu Gao, and Oleg Safronov}
\address{Department of Mathematics and Statistics, UNCC, 9201 University City Blvd., Charlotte, NC 28223, USA}
\email{sgao7@charlotte.edu}
\email{osafrono@charlotte.edu}
\begin{document}
\maketitle

\thispagestyle{empty}

\begin{abstract}
We consider the graphene operator $D_m$ perturbed by a decaying potential $\alpha V$, where $\alpha$ is a coupling constant. We study the number $N(\lambda,\alpha)$ of eigenvalues of the operator $D(t)=D_m-tV$ passing through a regular point $\lambda\in\rho(D_m)$ as $t$ changes from $0$ to $\alpha$. We obtain asymptotic formulas for $N(\lambda,\alpha)$ as $\alpha\to\infty$.
\end{abstract}

\section{Introduction}

The free bilayer graphene operator $D_m$ is defined on the space $L^2(\mathbb R^2,\mathbb C^2)$ by
\begin{eqnarray*}
    D_m=\begin{pmatrix}
                m&\left(\frac{\partial}{\partial x_1}-i\frac{\partial}{\partial x_2}\right)^2\\\left(\frac{\partial}{\partial x_1}+i\frac{\partial}{\partial x_2}\right)^2&-m
                \end{pmatrix},\quad\text{where }m>0.
\end{eqnarray*} 
While  this particular  operator  was   introduced by Ferrulli,  Laptev and Safronov in \cite{SLF}, 
the study of bilayer graphene and its properties,  including the development of theoretical models and operators describing its electronic structure, 
has been ongoing for a longer period of time (since 2004).

The spectrum of the operator $D_m$ coincides with the set $(-\infty,-m]\cup[m,\infty)$. Thus, the interval $(-m,m)$ is a gap in the spectrum. Let $V:\mathbb R^2\to[0,\infty)$ be a non-negative potential on that decays as $|x|\to\infty$. In this paper, we analyze the discrete spectrum of the perturbed operator
\begin{eqnarray}
\label{main operator}
D(\alpha)=D_m-\alpha V,\quad\alpha>0.
\end{eqnarray} The operator $V$ in \eqref{main operator} is understood as the matrix operator
\begin{eqnarray*}
    V\cdot I=\begin{pmatrix}
        V&0\\0&V
    \end{pmatrix}.
\end{eqnarray*} The constraints that we impose on the potential $V$ guarantee that the spectrum of $D(\alpha)$ is discrete in $(-m,m)$. That implies that it consists of isolated eigenvalues of finite multiplicity, which move monotonically from the right to the left with the growth of $\alpha$.

Let us fix a point $\lambda\in(-m,m)$ and introduce $N(\lambda,\alpha)$ as the number of eigenvalues of $D(t)$ passing through $\lambda$ when $t$ increases from $0$ to $\alpha$. We investigate the asymptotic behavior of $N(\lambda,\alpha)$ for sufficiently large values of $\alpha$. 

We define the cubes $\mathbb Q_n=[0,1)^2+n$ with $n\in\mathbb Z^2$, and set 
\begin{align*}
    \|V\|^q_{L^q(\mathbb Q_n)}=\int_{\mathbb Q_n}|V|^q\,dx,\qquad\text{for }q>0.
\end{align*} Our first result deals with the case
\begin{eqnarray}
\label{conditions for the first case}
    \quad\sum_{n\in\mathbb Z^2}\|V\|_{L^q(\mathbb Q_n)}<\infty,\qquad q>1.
\end{eqnarray}
\begin{theorem}
\label{first theorem}    
Let $V$ satisfy \eqref{conditions for the first case}. Then
    \begin{eqnarray*}
        N(\lambda,\alpha)\sim\frac{\alpha}{4\pi}\int_{\mathbb R^2}V\,dx,\quad\text{as }\alpha\to\infty.
    \end{eqnarray*}
\end{theorem}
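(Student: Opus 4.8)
The plan is to pass to a Birman--Schwinger operator, separate off its high-frequency principal part, and match the resulting eigenvalue count with the classical large-coupling Weyl asymptotics for the two-dimensional Schrödinger operator.

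First I would set up the Birman--Schwinger reduction. Since $\lambda\in(-m,m)\subset\rho(D_m)$ and $D_m$ is translation invariant, $(D_m-\lambda)^{-1}$ is the bounded self-adjoint matrix Fourier multiplier with symbol
\begin{equation*}
R(\xi)=\frac{1}{|\xi|^4+m^2-\lambda^2}\begin{pmatrix} m+\lambda & (i\xi_1+\xi_2)^2\\ (i\xi_1-\xi_2)^2 & \lambda-m\end{pmatrix},
\end{equation*}
and, writing $V=V^{1/2}V^{1/2}$, one has $\lambda\in\sigma(D_m-tV)$ exactly when $1/t$ is an eigenvalue of the compact self-adjoint operator $\mathcal B(\lambda)=V^{1/2}(D_m-\lambda)^{-1}V^{1/2}$ (compactness being the discreteness already recorded for $D(\alpha)$). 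Since the eigenvalues of $D_m-tV$ in the gap decrease monotonically in $t$ and there are none at $t=0$, this gives $N(\lambda,\alpha)=n_+(1/\alpha;\mathcal B(\lambda))$, where $n_+(s;\cdot)$ denotes the number of eigenvalues exceeding $s$; so it suffices to show that $n_+(s;\mathcal B(\lambda))\sim(4\pi s)^{-1}\int_{\mathbb R^2}V\,dx$ as $s\to0^+$. Note also that \eqref{conditions for the first case} forces $V\in L^1(\mathbb R^2)$ by Hölder on each cube $\mathbb Q_n$.

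Next I would isolate the principal part of the multiplier. Let $\chi$ be smooth, equal to $1$ for $|\xi|\ge1$ and vanishing near $\xi=0$, and let $R_0(D)$ be the Fourier multiplier with symbol
\begin{equation*}
R_0(\xi)=\chi(\xi)\,|\xi|^{-4}\begin{pmatrix}0&(i\xi_1+\xi_2)^2\\ (i\xi_1-\xi_2)^2&0\end{pmatrix},
\end{equation*}
a Hermitian matrix with eigenvalues $\pm\chi(\xi)|\xi|^{-2}$. The difference $R-R_0$ is bounded and $O(|\xi|^{-4})$ at infinity, so, sandwiched by $V^{1/2}$ and using $V\in L^1$, the operator $V^{1/2}(R-R_0)(D)V^{1/2}$ is trace class; in particular $n_+(s;V^{1/2}(R-R_0)(D)V^{1/2})=o(1/s)$, and by the Weyl/Ky Fan inequalities the asymptotics of $n_+(s;\mathcal B(\lambda))$ coincide with those of $n_+(s;V^{1/2}R_0(D)V^{1/2})$.

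It remains to treat the principal part, and this is where the main difficulty lies. The spectral projections $P_\pm(\xi)$ of $R_0(\xi)$ are themselves Fourier multipliers, so $V^{1/2}R_0(D)V^{1/2}=A_+-A_-$ with $A_\pm=V^{1/2}\chi(D)(-\Delta)^{-1}P_\pm(D)V^{1/2}\ge0$; since $P_+(\xi)P_-(\xi)=0$, the product $A_+A_-$ collapses to a term built from the commutator $[V,\chi(D)(-\Delta)^{-1}P_-(D)]$, which improves the order, so that $A_+A_-$ lies in a Schatten ideal strictly inside $\mathfrak S_{1,\infty}$; this ``asymptotic orthogonality'' gives $n_+(s;A_+-A_-)=n_+(s;A_+)+o(1/s)$. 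Trivializing the rank-one projection $P_+(D)$ by the zero-order unitary multiplier that diagonalizes it, at the cost of commutators with $V^{1/2}$ that are again negligible at the scale $1/s$, identifies the eigenvalue asymptotics of $A_+$ with those of the scalar operator $V^{1/2}\chi(D)(-\Delta)^{-1}V^{1/2}$ --- the Birman--Schwinger operator of the two-dimensional Schrödinger problem --- for which the classical large-coupling Weyl asymptotics (Birman--Solomyak, Birman--Laptev), valid precisely under \eqref{conditions for the first case}, give
\begin{equation*}
n_+(s;A_+)\sim\frac{1}{(2\pi)^2}\bigl|\{(x,\xi)\in\mathbb R^2\times\mathbb R^2:\,|\xi|^{-2}V(x)>s\}\bigr|=\frac{1}{4\pi s}\int_{\mathbb R^2}V\,dx .
\end{equation*}
Setting $s=1/\alpha$ and collecting the estimates yields $N(\lambda,\alpha)\sim\frac{\alpha}{4\pi}\int_{\mathbb R^2}V\,dx$. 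The one genuinely delicate point is that the diagonalizing multipliers $P_\pm(D)$ do not commute with multiplication by the only $L^q$-regular potential $V$, so every commutator and remainder term must be placed in a Schatten ideal too small to contribute at order $1/s$; this is exactly the role of the combined local-integrability and decay hypothesis \eqref{conditions for the first case}.
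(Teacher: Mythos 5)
Your reduction steps coincide with the paper's: the Birman--Schwinger identity $N(\lambda,\alpha)=n_+(\alpha^{-1},\,V^{1/2}(D_m-\lambda I)^{-1}V^{1/2})$, the extraction of the off-diagonal principal symbol $\chi(\xi)|\xi|^{-4}\bigl(\cdots\bigr)$ with a cutoff near $\xi=0$, and the verification that the sandwiched difference of the two multipliers is trace class are exactly the paper's moves. The divergence, and the gap, lies in the last step. The paper concludes by invoking the Birman--Solomyak Weyl-type theorem for operators $W\,a(D)\,W$ with a \emph{matrix-valued} symbol (Theorem 2 of \cite{BS}), so the coefficient is $(2\pi)^{-2}\iint n_+(1,V(x)a(\xi))\,dx\,d\xi$, computed pointwise from the eigenvalues $\pm V(x)|\xi|^{-2}$; no diagonalization, commutators, or orthogonality lemma enter at all. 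You instead diagonalize the symbol by hand and reduce to the scalar operator $V^{1/2}\chi(D)(-\Delta)^{-1}V^{1/2}$, and the two pivotal claims in that reduction are asserted rather than proved.

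First, the claim that $A_+A_-$, and likewise the commutators of $V^{1/2}$ with the diagonalizing zero-order multiplier, lie in an ideal strictly inside $\Sigma_1$ is not available at the stated generality: for $V$ satisfying only \eqref{conditions for the first case}, a commutator such as $[V,\chi(D)(-\Delta)^{-1}P_\pm(D)]$ gains nothing, because $V$ has no smoothness. Such gains require first replacing $V$ by a smooth compactly supported approximant and then showing the asymptotic coefficient is stable under $\Sigma_1$-small errors --- precisely the approximation argument (Proposition~\ref{proposition 3.1} together with the Ky Fan inequality and Theorem~\ref{Weyl's theorem}) that occupies the first half of the paper's proof and that your proposal omits; it cannot be subsumed in the closing remark that \eqref{conditions for the first case} ``is exactly the role'' of the hypothesis. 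Second, even granting $A_+A_-\in\Sigma_1^0$, the asserted splitting $n_+(s,A_+-A_-)=n_+(s,A_+)+o(1/s)$ is not automatic: $A_+-A_-\le A_+$ gives only the upper bound, and the matching lower bound is a genuine lemma about asymptotically orthogonal pairs of nonnegative operators that must be proved or cited precisely. (A cleaner version of your scalar reduction writes $P_+(\xi)=e(\xi)e(\xi)^*$ and uses that $KK^*$ and $K^*K$ share their nonzero spectrum, but $K^*K$ still carries the direction field $e(\xi)$, so one is back to a symbol of product type $V(x)a(\xi)$ and hence to the matrix-symbol theorem the paper cites.) In short, the skeleton is correct and matches the paper, but the final identification of the Weyl coefficient rests on two unproved steps, one of which fails without a prior regularization of $V$.
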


Now we consider the case where $V\not\in L^1(\mathbb R^2)$. Instead, we assume that $V$ is a bounded function obeying the condition
\begin{eqnarray}
\label{conditions for the second case} V(x)\sim\frac{\Psi(\theta)}{|x|^p},\quad\text{as }|x|\to\infty,
\end{eqnarray} where $\theta=x/|x|$ and $0<p<2$. The function $\Psi\ge 0$ is assumed to be continuous on the unit circle $\mathbb S=\{x\in\mathbb R^2:|x|=1\}$.

\begin{theorem}
\label{second theorem}
    Let $0<p<2$ and let $V$ satisfy \eqref{conditions for the second case}. Then
    \begin{eqnarray*}
    N(\lambda,\alpha)\sim\frac{\alpha^{2/p}}{4\pi}\int_{\mathbb R^2}[((\lambda+\Psi(\theta)|x|^{-p})_+^2-m^2)_+]^{1/2}\,dx,\quad\text{as }\alpha\to\infty.
\end{eqnarray*}
\end{theorem}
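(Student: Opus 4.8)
I would reduce, via the Birman--Schwinger principle and the diagonalisation of $D_m$, to a scalar semiclassical counting problem, and then verify that the negative spectral subspace of $D_m$ contributes only a lower-order term. \emph{Step 1.} Since $V\cdot I\ge 0$ and the eigenvalues of $D(t)=D_m-tV$ decrease in $t$, the Birman--Schwinger principle gives
\[
N(\lambda,\alpha)=n_+\!\big(\tfrac1\alpha;\ V^{1/2}(D_m-\lambda)^{-1}V^{1/2}\big),
\]
where $n_+(s;A)$ is the number of eigenvalues of a compact self-adjoint $A$ exceeding $s$; the decay of $V$ and the smoothing of $(D_m-\lambda)^{-1}$ make this operator compact.

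\emph{Step 2.} Let $U$ be the Fourier multiplier by the smooth, homogeneous-degree-$0$ unitary $U(\xi)$ diagonalising the symbol of $D_m$, so that $UD_mU^*=\Lambda:=\operatorname{diag}(\omega,-\omega)$ with $\omega:=\sqrt{m^2+(-\Delta)^2}\ge m$. Conjugating and putting $S:=UV^{1/2}U^*\ge 0$,
\[
N(\lambda,\alpha)=n_+\!\big(\tfrac1\alpha;\ S\,(\Lambda-\lambda)^{-1}S\big).
\]
I split $S=V^{1/2}\!\cdot\! I+T$ with $T:=[U,V^{1/2}]U^*$, and set $\mathcal B_0:=V^{1/2}(\Lambda-\lambda)^{-1}V^{1/2}=\operatorname{diag}\!\big(V^{1/2}(\omega-\lambda)^{-1}V^{1/2},\,-V^{1/2}(\omega+\lambda)^{-1}V^{1/2}\big)$, which is block diagonal since $(\Lambda-\lambda)^{-1}$ is diagonal and $V^{1/2}\!\cdot\! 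I$ is scalar. Its negative block is $\le 0$ and therefore invisible to $n_+$, so $n_+(s;\mathcal B_0)=n_+\!\big(s;V^{1/2}(\omega-\lambda)^{-1}V^{1/2}\big)$.

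\emph{Step 3 (the crux).} I would then show that the remainder $\mathcal E:=S(\Lambda-\lambda)^{-1}S-\mathcal B_0=T(\Lambda-\lambda)^{-1}V^{1/2}+V^{1/2}(\Lambda-\lambda)^{-1}T+T(\Lambda-\lambda)^{-1}T$ has counting function $o(\alpha^{2/p})$. The key point is that $T$ is a pseudodifferential operator of order $-1$ whose symbol decays like $\langle x\rangle^{-p/2-1}$ (this extra decay in $\xi$ and in $x$ is the gain of the commutator $[U,V^{1/2}]$); combined with $(\Lambda-\lambda)^{-1}$ being of order $-2$ and $V^{1/2}\lesssim\langle x\rangle^{-p/2}$, Birman--Solomyak-type Schatten-norm bounds put $\mathcal E\in\mathfrak S_q$ for some $q<2/p$, hence $n(s;|\mathcal E|)=o(s^{-2/p})$ as $s\to 0$. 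Note that the ``negative'' operator $V^{1/2}(\omega+\lambda)^{-1}V^{1/2}$ has, by itself, the \emph{same} order $\alpha^{2/p}$ as the main term, so its dropping out is a structural cancellation (it enters only with a minus sign, on a nearly orthogonal subspace) and not a matter of size; this is exactly what forces the commutator analysis. Since $\Psi$ is only continuous I would first carry out Steps 2--3 for a $V$ with a smooth tail (and a smooth positive modification near the origin, harmless for the asymptotics), then pass to general $\Psi$ by approximation in $C(\mathbb S)$, using monotonicity of $N(\lambda,\cdot)$ in $V$ and continuity in $\Psi$ of the constant in the statement. Feeding $\mathcal E\in\mathfrak S_q$ into the inequality $n_+(s_1+s_2;A+B)\le n_+(s_1;A)+n_+(s_2;B)$ gives, for each $\varepsilon\in(0,1)$,
\[
n_+\!\big(\tfrac{1+\varepsilon}{\alpha};V^{1/2}(\omega-\lambda)^{-1}V^{1/2}\big)-o(\alpha^{2/p})\le N(\lambda,\alpha)\le n_+\!\big(\tfrac{1-\varepsilon}{\alpha};V^{1/2}(\omega-\lambda)^{-1}V^{1/2}\big)+o(\alpha^{2/p}).
\]

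\emph{Step 4.} Finally, $n_+\!\big(\tfrac1t;V^{1/2}(\omega-\lambda)^{-1}V^{1/2}\big)$ equals the number of eigenvalues of $\omega-tV$ lying below $\lambda$. As $\omega(\xi)-\lambda\asymp|\xi|^2$ at infinity and $0<p<2$, this is a Birman--Laptev slowly-decaying-potential problem: the operator lies in $\mathfrak S_{2/p,\infty}$ and its counting function obeys the Weyl law
\[
n_+\!\big(\tfrac1t;V^{1/2}(\omega-\lambda)^{-1}V^{1/2}\big)\sim\frac1{(2\pi)^2}\Big|\big\{(x,\xi):\sqrt{m^2+|\xi|^4}<\lambda+tV(x)\big\}\Big|=\frac1{4\pi}\int_{\mathbb R^2}\big[((\lambda+tV(x))_+^2-m^2)_+\big]^{1/2}dx,
\]
the $\xi$-slice being a disc of area $\pi[((\lambda+tV(x))_+^2-m^2)_+]^{1/2}$; I would cite this or reprove it by rescaling $x\mapsto t^{-1/p}x$, Dirichlet--Neumann bracketing, and the semiclassical Weyl law for the model $\sqrt{m^2+|\xi|^4}-\Psi(\theta)|x|^{-p}$. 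With $t=(1\mp\varepsilon)\alpha$ and the substitution $x=(\alpha/(1\mp\varepsilon))^{1/p}y$ (so that $tV(x)\to\Psi(\theta)|y|^{-p}$ and $dx=(\alpha/(1\mp\varepsilon))^{2/p}dy$), the right side becomes $(1\mp\varepsilon)^{-2/p}\alpha^{2/p}\cdot\frac1{4\pi}\int_{\mathbb R^2}[((\lambda+\Psi(\theta)|y|^{-p})_+^2-m^2)_+]^{1/2}dy\,(1+o(1))$; the integral converges because $p<2$ tames the singularity at $y=0$ while $\lambda<m$ makes the integrand vanish for $|y|$ large. Combining with Step 3 and letting $\alpha\to\infty$, then $\varepsilon\to 0$, proves Theorem \ref{second theorem}. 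The main obstacle is Step 3 — the structural cancellation of the negative subspace, which demands the commutator estimate and, since $\Psi$ is merely continuous, an approximation argument.
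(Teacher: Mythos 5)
Your route is genuinely different from the paper's: you diagonalize the symbol of $D_m$ by a Fourier-multiplier unitary and push everything onto the scalar, semibounded operator $\omega=\sqrt{m^2+(-\Delta)^2}$, discarding the negative block and the commutator errors, whereas the paper never diagonalizes; it splits $\mathbb R^2$ into the $\alpha$-dependent regions $|x|<\varepsilon_1\alpha^{1/p}$, $\varepsilon_1\alpha^{1/p}\le|x|\le\varepsilon_2\alpha^{1/p}$, $|x|>\varepsilon_2\alpha^{1/p}$, kills the cross terms and the inner/outer diagonal pieces by Schatten-norm and operator-norm estimates, and computes the annulus contribution by tiling with cubes of size $\propto\alpha^{1/p}$ and proving a local Weyl law (Proposition~\ref{pr2.2}) by a trace/polynomial-approximation argument. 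Your Steps 1--2 are sound (Step 1 is the paper's Proposition~\ref{B-S principle}; the diagonalizer exists as a smooth order-zero symbol, though it is not homogeneous of degree $0$ because of the mass term), and your observation that the negative block, while itself of order $\alpha^{2/p}$, is invisible to $n_+$ is correct. Step 3 is plausible for potentials with symbol-type derivative bounds, and the approximation step is repairable: monotonicity of $n_+\bigl(s,W(D_m-\lambda I)^{-1}W\bigr)$ in $W$ does hold for $0\le W_1\le W_2$ (variational substitution $u\mapsto W_2^{-1}W_1u$ on a subspace witnessing the count, after a regularization making $W_2$ invertible), but since the resolvent is indefinite this is not a quotable standard fact and you must prove it; note the paper avoids needing smooth $V$ altogether in Theorem~\ref{second theorem}.

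The genuine gap is Step 4, which is where the actual content of the theorem sits. The asymptotics $n_+\bigl(\tfrac1t;V^{1/2}(\omega-\lambda)^{-1}V^{1/2}\bigr)\sim(2\pi)^{-2}\,\mathrm{vol}\{(x,\xi):\sqrt{m^2+|\xi|^4}<\lambda+tV(x)\}$ for $V\sim\Psi(\theta)|x|^{-p}$ is not an off-the-shelf citation: after the rescaling $x=t^{1/p}y$ the coefficient is determined by the symbol on the bounded level set $\sqrt{m^2+|\xi|^4}\le\lambda+\Psi(\theta)|y|^{-p}$, i.e.\ by the \emph{full, non-homogeneous} symbol at finite $\xi$, so the classical Birman--Solomyak asymptotics for $W\,a(D)\,W$ with homogeneous $a$ (or the result used for Theorem~\ref{first theorem}, which needs $W\in C_0^\infty$) does not apply verbatim. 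Worse, your fallback proof by rescaling plus Dirichlet--Neumann bracketing fails: $\omega$ is nonlocal, its quadratic form does not decouple over subdomains, so Dirichlet--Neumann decoupling provides neither of the two-sided form bounds the bracketing argument needs --- this is exactly the obstruction the paper flags in the introduction and circumvents by proving Proposition~\ref{pr2.2} via trace estimates, polynomial approximation of the spectral cutoff, and exponential off-diagonal decay of the resolvent kernel. To complete your route you would have to prove an analogous local Weyl law for $\phi_\beta(\omega-\lambda)^{-1}\phi_\beta$ on scaled cubes, plus an operator-level bound for the region near the singularity $y=0$ (the convergence of the limiting integral for $p<2$ is not enough; the paper's inner-region estimate $C\varepsilon_1^{2-p}\alpha^{2/p}$ is what controls this) and a crude bound for the far region. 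In short, your reduction buys semiboundedness and a scalar problem, but not locality, and the missing scalar Weyl lemma is essentially equivalent to the work the paper does in its treatment of $W_1(D_m-\lambda I)^{-1}W_1$, $W_3(D_m-\lambda I)^{-1}W_3$ and $W_2(D_m-\lambda I)^{-1}W_2$.
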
 Here, $a_+$ denotes the positive part of the number $a$, i.e.,
\begin{eqnarray*}
    a_+=\max\{a,0\}.
\end{eqnarray*}

Let us give a short summary of the past research in the area. Eigenvalues in gaps of the continuous spectrum were studied extensively for Schr\"odinger operators. For instance, R. Hempel proved in \cite{H} that the number of eigenvalues of the operator $-\Delta+f-\alpha V$ passing through the point $\lambda$ of a gap in the spectrum of the periodic operator $-\Delta+f$ obeys Weyl's law:
\begin{align}
    N(\lambda,\alpha)\sim(2\pi)^{-d}\omega_d\alpha^{d/2}\int_{\mathbb R^d}V^{d/2}\,dx,\qquad\text{as }\alpha\to\infty,
    \label{Weyl's law}
\end{align} where $\omega_d$ is volume of the unit ball in $\mathbb R^d$. Then M. Birman proved \eqref{Weyl's law} through a different approach in \cite{B}. Other relevant results could also be found in \cite{ADH,H2}.  Such results could be linked to the study of crystal color. When some ions in a crystal lattice are replaced by impurity ions, the resulting perturbation, described by the potential $\alpha V$, can create new energy levels. These energy levels are exactly the eigenvalues of $-\Delta +f -\alpha V$. They are responsible  for the absorption of the light of specific wavelengths, leading to the observed color of the crystal. Based on these remarks, the problem addressed in the present paper can be associated with the study of the color of graphene. 
 
Several mathematicians have studied eigenvalues in gaps of the spectrum with non-signdefinite perturbations (see \cite{DH} and \cite{S}). In this case, the eigenvalues of the operator are not monotone functions of $\alpha$, and the quantity $N(\lambda,\alpha)$ should be defined as the difference of the number of eigenvalues passing $\lambda$ in two opposite directions. Using this definition, Safronov \cite{S} generalized the formula \eqref{Weyl's law} to the case of non-signdefinite perturbations. The only change is that $V$ has to be replaced by $V_+$ on the right hand side of \eqref{Weyl's law}.

The graphene operator $D_m$ was introduced in the paper by Ferrulli, Laptev and Safronov  \cite{SLF}. However, the authors of \cite{SLF} analyzed complex eigenvalues of the perturbed operator $D(\alpha)$ with a non-self adjoint perturbation $V$ and a fixed value of $\alpha$. Our results differ from theorems in \cite{SLF} in a critical way: the perturbations that we consider are self-adjoint and $\alpha\to\infty$.

The current paper  overcomes  the main technical difficulty appearing  when one studies   operators having only   bounded  gaps in the spectrum: one cannot  use  Dirichlet-Neumann bracketing as  suggested in  
the  paper  by Alama, Deift, and Hempel \cite{ADH}.
While Dirichlet-Neumann bracketing is a powerful technique for analyzing the spectrum of semi-bounded  operators, 
it cannot be applied  when dealing with operators that are not semi-bounded.

\section{Preliminaries}

Here, we   provide necessary background information that is  needed  to understand the subsequent sections of the paper. 

Let ${\frak H}$ be a separable Hilbert  space.  The class of compact operators  on ${\frak H}$ will be denoted by $\frak S_\infty$.
For a compact operator $T\in {\frak S}_\infty$, the symbols $s_k(T)$ denote the singular values of
$T$ enumerated in the non-increasing order $(k\in \mathbb N)$ and counted in accordance with their multiplicity. Observe that $s^2_k(T)$ are eigenvalues of the operator $T^*T$. We set
\begin{eqnarray*}
    n(s,T)=\#\{k: s_k(T)> s\},\qquad s>0.
\end{eqnarray*} For a self-adjoint compact operator $T$, we also set
\begin{eqnarray*}
    n_\pm(s,T)=\#\{k : \pm\lambda_k(T)>s\},\qquad s>0,
\end{eqnarray*} where $\lambda_k(T)$ are eigenvalues of $T$. It follows that (see \cite{BS})
\begin{eqnarray*}
    n_\pm(s_1 + s_2, T_1 + T_2)\le n_\pm(s_1, T_1)+n_\pm(s_2, T_2), \qquad s_1,s_2>0.
\end{eqnarray*} 
A similar inequality holds for the function $n$. Also,
\begin{eqnarray*}
    n(s_1s_2, T_1T_2)\le n(s_1, T_1)+n(s_2,T_2),\qquad s_1, s_2>0.
\end{eqnarray*} The class of compact operators $T$ whose singular values satisfy
\begin{eqnarray*}
    \|T\|^p_{\frak S_p}:=\sum_ks^p_k(T)<\infty,\qquad p>0,
\end{eqnarray*} is called the Schatten class $\frak S_p$.

Besides the classes $\frak S_p$, we will be dealing with the so-called
weak Schatten classes $\Sigma_p$ of compact operators $T$ obeying the condition
\begin{eqnarray*}
    \|T\|^p_{\Sigma_p}:=\sup_{s>0}s^pn(s,T)<\infty.
\end{eqnarray*} 
We also introduce the class $\Sigma_p^0$ as the collection of compact operators $T$ such that
\begin{align*}
    n(s,T)=o(s^{-p}),\quad\text{as }s\to0.
\end{align*} Note that $\frak S_p\subset\Sigma^0_p$.

\begin{proposition}
\label{Holder's inequality}
    Let $T_1\in\Sigma_p$ and $T_2\in\Sigma_q$, where $p>0$ and $q>0$. Then $T_1T_2\in\Sigma_r$, where $1/r = 1/p + 1/q$, and 
    \begin{eqnarray*}
    \|T_1T_2\|_{\Sigma_r}\le 2^{1/r}\|T_1\|_{\Sigma_p}\|T_2\|_{\Sigma_q}.    \end{eqnarray*}
\end{proposition}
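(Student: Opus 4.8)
\smallskip
\noindent\textbf{Proof proposal.}
The plan is to deduce everything from the submultiplicativity estimate $n(s_1s_2,T_1T_2)\le n(s_1,T_1)+n(s_2,T_2)$ recalled above, by splitting the level $s>0$ into a product $s=s_1s_2$ in the right way. First I would dispose of the trivial case $T_1=0$ or $T_2=0$ and assume $a:=\|T_1\|_{\Sigma_p}>0$ and $b:=\|T_2\|_{\Sigma_q}>0$. Since $T_1$ and $T_2$ are compact, so is $T_1T_2$, and the definition of the weak Schatten norms gives the pointwise bounds $n(\sigma,T_1)\le a^p\sigma^{-p}$ and $n(\sigma,T_2)\le b^q\sigma^{-q}$ for every $\sigma>0$.

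Next, for a fixed $s>0$ I would choose the decomposition $s=s_1s_2$ that \emph{balances} the two resulting error terms. With $1/r=1/p+1/q$, put $E:=(ab/s)^r$ and take $s_1:=aE^{-1/p}$, $s_2:=bE^{-1/q}$. A one-line check using $1/p+1/q=1/r$ shows $s_1s_2=ab\,E^{-1/r}=s$ and $a^ps_1^{-p}=b^qs_2^{-q}=E$. Feeding this into the submultiplicativity inequality gives $n(s,T_1T_2)\le a^ps_1^{-p}+b^qs_2^{-q}=2E=2(ab)^rs^{-r}$, hence $s^rn(s,T_1T_2)\le 2(\|T_1\|_{\Sigma_p}\|T_2\|_{\Sigma_q})^r$ for every $s>0$. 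This already shows $T_1T_2\in\Sigma_r$, and taking the supremum over $s>0$ followed by an $r$-th root yields the claimed bound $\|T_1T_2\|_{\Sigma_r}\le 2^{1/r}\|T_1\|_{\Sigma_p}\|T_2\|_{\Sigma_q}$.

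I do not expect a genuine obstacle here: once submultiplicativity of $n$ is in hand the argument is essentially forced, and the only point requiring a moment's thought is the choice of the splitting $s=s_1s_2$. One could instead minimize $a^ps_1^{-p}+b^qs_2^{-q}$ under the constraint $s_1s_2=s$ by a Lagrange-multiplier computation, which replaces the constant $2$ by the smaller quantity $(q/p)^{p/(p+q)}(p+q)/q$; an elementary single-variable analysis shows that this quantity is largest, and equals $2$, exactly when $p=q$, so the simpler balanced choice above already produces the constant $2^{1/r}$ and nothing is lost by not optimizing.
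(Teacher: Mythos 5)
Your proof is correct and follows essentially the same route as the paper: both rest on the submultiplicativity bound $n(s_1s_2,T_1T_2)\le n(s_1,T_1)+n(s_2,T_2)$ together with a factorization of the level $s$, the only difference being that the paper uses the symmetric split $s=s^{r/p}s^{r/q}$ and then normalizes $T_1,T_2$ by their weak norms, whereas you absorb that normalization into the balanced choice of $s_1,s_2$ — unwinding the scaling shows the two are the same argument. Your closing remark on the optimized constant is a harmless aside and does not affect the proof.
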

\begin{proof}
    Since $s=s^{r/p}s^{r/q}$, we have
    \begin{eqnarray*}
        n(s,T_1T_2)&\le& n(s^{r/p},T_1)+n(s^{r/q},T_2)\\
        &\le&s^r(\|T_1\|^p_{\Sigma_p}+\|T_2\|^q_{\Sigma_q}).
        \end{eqnarray*} Therefore,
        \begin{eqnarray}
        \label{sigma class inequality}
            \|T_1T_2\|^r_{\Sigma_r}\le\|T_1\|^p_{\Sigma_p}+\|T_2\|^q_{\Sigma_q}.       \end{eqnarray} Clearly, the inequality \eqref{sigma class inequality} implies the estimate
            \begin{eqnarray*}
                \left\|\frac{1}{\|T_1\|_{\Sigma_p}\|T_2\|_{\Sigma_q}}T_1T_2\right\|_{\Sigma_r}\le 2^{1/r},
            \end{eqnarray*} which can alternatively be written in the form $\|T_1T_2\|_{\Sigma_r}\le 2^{1/r}\|T_1\|_{\Sigma_p}\|T_2\|_{\Sigma_q}$.
\end{proof} 

For self-adjoint operators $T=T^*\in\Sigma_p$, we introduce the functionals
\begin{align*}
    \Delta^\pm_p(T):=\limsup_{s\to0}s^pn_\pm(s,T),\qquad\delta^\pm_p(T):=\liminf_{s\to0}s^pn_\pm(s,T)
\end{align*}

The following theorem is due to H. Weyl.
\begin{theorem}
\label{Weyl's theorem}
    If $T_1,T_2$ are self-adjoint operators, $T_1,T_2\in\Sigma_p$, and $T_1-T_2\in\Sigma_p^0$, then
    \begin{align*}
        \Delta^\pm_p(T_1)=\Delta^\pm_p(T_2),\qquad\delta^\pm_p(T_1)=\delta^\pm_p(T_2),
        \end{align*}
\end{theorem}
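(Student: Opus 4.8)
The plan is to deduce the whole statement from the subadditivity of the counting functions recorded above, namely $n_\pm(s_1+s_2,T_1+T_2)\le n_\pm(s_1,T_1)+n_\pm(s_2,T_2)$, combined with the fact that the difference $R:=T_1-T_2$ lies in the \emph{small} class $\Sigma_p^0$, so that $s^pn_\pm(s,R)\le s^pn(s,R)\to 0$ as $s\to 0$. Since $T_2-T_1=-R\in\Sigma_p^0$ as well, the situation is symmetric in $T_1$ and $T_2$; and since the inequality for $n_\pm$ holds with either sign, it suffices to prove the single bound $\Delta^+_p(T_1)\le\Delta^+_p(T_2)$ together with its $\delta^+_p$-analogue, and then invoke symmetry and the sign flip.

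To carry this out I would fix $\ve\in(0,1)$, write $s=(1-\ve)s+\ve s$, and apply subadditivity to $T_1=T_2+R$:
\begin{align*}
 n_+(s,T_1)\le n_+\big((1-\ve)s,\,T_2\big)+n_+\big(\ve s,\,R\big).
\end{align*}
Multiplying by $s^p$ and rescaling each term separately gives
\begin{align*}
 s^pn_+(s,T_1)\le (1-\ve)^{-p}\big[(1-\ve)s\big]^pn_+\big((1-\ve)s,T_2\big)+\ve^{-p}\big[\ve s\big]^pn_+\big(\ve s,R\big).
\end{align*}
Because $R\in\Sigma_p^0$, the final term tends to $0$ as $s\to 0$ (with $\ve$ fixed), so taking $\limsup_{s\to0}$ yields $\Delta^+_p(T_1)\le(1-\ve)^{-p}\Delta^+_p(T_2)$; letting $\ve\to 0$ gives $\Delta^+_p(T_1)\le\Delta^+_p(T_2)$, and interchanging $T_1,T_2$ gives the reverse inequality, hence equality. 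For the $\delta$-statement I would take $\liminf_{s\to0}$ in the same displayed inequality: since the $R$-term converges to $0$, the $\liminf$ of the right-hand side equals $(1-\ve)^{-p}\delta^+_p(T_2)$, so $\delta^+_p(T_1)\le(1-\ve)^{-p}\delta^+_p(T_2)$, and again we send $\ve\to 0$ and symmetrize. Replacing $n_+$ by $n_-$ verbatim throughout settles the $-$ case, which completes the proof.

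The argument is short and I do not expect a genuine obstacle, but two points need care. First, one must split $s$ \emph{asymmetrically}, as $(1-\ve)s+\ve s$ rather than into two equal halves, so that the leading term reproduces $[(1-\ve)s]^pn_+((1-\ve)s,T_2)$ and hence recovers $\Delta^+_p(T_2)$ (resp. $\delta^+_p(T_2)$) exactly in the limit $\ve\to0$. Second, the hypothesis $R\in\Sigma_p^0$ is used in an essential way: if one only knew $R\in\Sigma_p$, the term $[\ve s]^pn_+(\ve s,R)$ would contribute a nonzero amount of order $\|R\|^p_{\Sigma_p}$ and the conclusion would genuinely fail. Everything else is routine bookkeeping with $\limsup$/$\liminf$.
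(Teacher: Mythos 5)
Your argument is correct. Note that the paper does not prove this statement at all: it is quoted in the preliminaries as a classical result of Weyl (standard in the Birman--Solomyak perturbation theory of spectral asymptotics), so there is no in-paper proof to compare against; your write-up supplies the standard argument that such references contain. The two points you flag are exactly the right ones, and the remaining implicit step is also fine: since $T_1,T_2$ are self-adjoint, $R=T_1-T_2$ is self-adjoint, so $n_\pm(s,R)\le n(s,R)=o(s^{-p})$, which is what lets the error term $\ve^{-p}[\ve s]^p n_\pm(\ve s,R)$ vanish for fixed $\ve$; and your use of $\liminf(a_s+b_s)=\liminf a_s$ when $b_s\to0$ is legitimate, so both the $\Delta^\pm_p$ and $\delta^\pm_p$ equalities follow after symmetrizing in $T_1,T_2$.
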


For $\lambda\in(-m,m)$, we define the operator $X_\lambda$ by
\begin{eqnarray}
\label{definition of Birman-Schwinger operator}
    X_\lambda=W(D_m-\lambda I)^{-1}W,\qquad W=\sqrt{V}.
\end{eqnarray} The next statement is widely  known as the Birman-Schwinger principle. 
It  allows one to transform the  spectral problem involving the  unbounded operator $D(\alpha)$
  into a spectral problem for the compact  operator $X_\lambda$.

\begin{proposition}
\label{B-S principle}
    Let $\alpha>0$ and $X_\lambda$ be defined by \eqref{definition of Birman-Schwinger operator}. Then
    \begin{eqnarray*}
        N(\lambda,\alpha)=n_+(s,X_\lambda),\qquad\text{for }s=\alpha^{-1}.
    \end{eqnarray*}
\end{proposition}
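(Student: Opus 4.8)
The plan is to reduce the eigenvalue–counting problem for the unbounded family $D(t)=D_m-tV$, $t>0$, to a spectral problem for the compact operator $X_\lambda$ via the Birman--Schwinger correspondence, and then to read off $N(\lambda,\alpha)$ from the eigenvalues of $X_\lambda$. The algebraic heart of the matter is this: for each fixed $t>0$ and $\lambda\in\rho(D_m)$, the multiplication $\psi\mapsto W\psi$ is a linear isomorphism of $\ker(D(t)-\lambda)$ onto $\ker(X_\lambda-t^{-1})$, so these two kernels have the same (finite) dimension.

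First I would establish this isomorphism. Since $V$ is bounded, $D(t)$ and $D_m$ have the same domain. If $\psi\in\ker(D(t)-\lambda)$, then $(D_m-\lambda)\psi=tV\psi=tW(W\psi)$; applying $(D_m-\lambda)^{-1}$ and then $W$ gives $X_\lambda(W\psi)=t^{-1}W\psi$. The map $\psi\mapsto W\psi$ is injective on $\ker(D(t)-\lambda)$, for if $W\psi=0$ there, then $(D_m-\lambda)\psi=tW(W\psi)=0$, and $\lambda\in\rho(D_m)$ forces $\psi=0$. Conversely, if $X_\lambda\phi=t^{-1}\phi$, set $\psi:=t(D_m-\lambda)^{-1}W\phi$; then $W\psi=tX_\lambda\phi=\phi$ and $(D_m-tV-\lambda)\psi=tW\phi-tW(W\psi)=0$, so $\psi\in\ker(D(t)-\lambda)$. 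Thus $\psi\mapsto W\psi$ and $\phi\mapsto t(D_m-\lambda)^{-1}W\phi$ are mutually inverse, proving the claim. I would also record that $X_\lambda$ is self-adjoint (because $D_m=D_m^*$, $\lambda\in\mathbb R$, and $W$ is real multiplication) and compact under the standing hypotheses on $V$, so its nonzero spectrum consists of eigenvalues of finite multiplicity accumulating only at $0$; in particular $n_+(s,X_\lambda)<\infty$ for every $s>0$.

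Second comes the counting. Because $V\ge0$ and $V\not\equiv0$, the gap eigenvalues of $D(t)$ depend monotonically — in fact strictly decreasingly — on $t$: along any branch $\lambda_k(t)$ one has $\tfrac{d}{dt}\lambda_k(t)=-\langle V\psi_k,\psi_k\rangle=-\|W\psi_k\|^2<0$, the inequality being strict since $W\psi_k\neq0$ by the injectivity above. Hence each branch meets the fixed level $\lambda$ at most once, and at $t=0$ no eigenvalue sits at $\lambda$ since $\lambda\in\rho(D_m)$. Consequently, counting with multiplicity,
\begin{align*}
    N(\lambda,\alpha)
      &=\sum_{0<t<\alpha}\dim\ker\bigl(D(t)-\lambda\bigr)
       =\sum_{0<t<\alpha}\dim\ker\bigl(X_\lambda-t^{-1}\bigr)\\
      &=\#\{k:\lambda_k(X_\lambda)>\alpha^{-1}\}
       =n_+(\alpha^{-1},X_\lambda),
\end{align*}
where in the penultimate step we used that a positive eigenvalue $\lambda_k(X_\lambda)$ contributes to $\dim\ker(X_\lambda-t^{-1})$ at the single parameter value $t=\lambda_k(X_\lambda)^{-1}$, which lies in $(0,\alpha)$ precisely when $\lambda_k(X_\lambda)>\alpha^{-1}$ (negative eigenvalues of $X_\lambda$ correspond to $t<0$ and are irrelevant), and that only finitely many such eigenvalues exist.

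I expect the main obstacle to be the bookkeeping in the counting step rather than the Birman--Schwinger identity, which is routine. One must pin down what ``the number of eigenvalues passing through $\lambda$'' means — including the convention at the right endpoint $t=\alpha$, which must be the strict one ($t<\alpha$) to agree with $n_+$ — rule out the possibility that eigenvalues appear in the interior of the gap (they can enter $(-m,m)$ only through the edge $m$ and then descend), and invoke just enough analytic perturbation theory (Rellich's theorem for the analytic family $D_m-tV$ with bounded $V$) to organise $\sigma_{\mathrm{disc}}(D(t))\cap(-m,m)$ into the monotone branches used above. Once strict monotonicity of the branches is in place, the displayed chain of equalities closes the argument.
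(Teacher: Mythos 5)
The paper itself does not prove this proposition: it is stated and immediately outsourced to Birman's paper \cite{B}. Your argument is, in substance, exactly the classical Birman--Schwinger mechanism on which that cited proof rests: the mutually inverse maps $\psi\mapsto W\psi$ and $\phi\mapsto t(D_m-\lambda I)^{-1}W\phi$ identifying $\ker(D(t)-\lambda)$ with $\ker(X_\lambda-t^{-1})$, strict monotone decrease of the gap eigenvalue branches via $\tfrac{d}{dt}\lambda_k(t)=-\|W\psi_k\|^2<0$ (with strictness supplied by the same injectivity observation), and the resulting identification of level crossings for $t\in(0,\alpha)$ with eigenvalues of $X_\lambda$ exceeding $\alpha^{-1}$; the bookkeeping, including the remark that negative eigenvalues of $X_\lambda$ correspond to $t<0$ and that the endpoint convention must be the strict one, is handled correctly. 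The one caveat worth flagging is that you explicitly run the argument for bounded $V$ (equality of the domains of $D(t)$ and $D_m$, Rellich for the analytic family $D_m-tV$). That covers the setting of Theorem~\ref{second theorem}, but under condition \eqref{conditions for the first case} of Theorem~\ref{first theorem} the potential is only locally $L^q$ with $q>1$ and may be unbounded, so to obtain the proposition in the generality the paper uses it one must repeat the same scheme in the quadratic-form framework (using that such $V$ is relatively form-compact with respect to $|D_m|=\sqrt{\Delta^2+m^2}$, whose form domain is $H^2(\mathbb R^2)$, so that $D(t)$ is defined as a form sum and the branches are still real-analytic in $t$); this is a routine but nonempty adjustment, and it is precisely the level of generality of the result in \cite{B} that the paper invokes.
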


For the proof of this statement, see \cite{B}.\\

We will also need the following theorem (see M. Birman \cite{B}) in which $-\Delta$ is the operator on ${\Bbb R}^d$ with an arbitrary $d\in {\Bbb N}$.

\begin{theorem}
\label{three cases} Let $W$ be a real-valued   function on  ${\Bbb R}^d$.
Let $X=W((-\Delta)^l+I)^{-1}W$, $\,V=W^2$ and $p=d/2l$. Assume that $[V]_p<\infty$, where
\begin{equation*}
[V]_p=\begin{cases}
    \sum_{n\in\mathbb Z^d}\|V\|_{L^1(\mathbb Q_n)}^p,&\text{ if }0<p<1;\\
    \sum_{n\in\mathbb Z^d}(\int_{\mathbb Q_n} V^q\,dx)^{1/q},\forall q>1,&\text{ if }p=1;\\\|V\|_p, &\text{ if }p>1,
\end{cases} 
\end{equation*} and $\mathbb Q_n=[0,1)^d+n$ for $n\in\mathbb Z^d$. Then $X\in\Sigma_p$ and $\|X\|_{\Sigma_p}\le C[V]_p$ for some constant $C>0$ independent of $V$. 
\end{theorem}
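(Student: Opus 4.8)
The plan is to recast $X$ so that the classical $\mathfrak{S}_p$-estimates for operators of the form ``multiplication $\times$ Fourier multiplier'' (Cwikel, Kato--Seiler--Simon) become applicable, and then to run a Birman--Solomyak localization over the cubes $\mathbb{Q}_n$. Put $G=((-\Delta)^{l}+I)^{-1}$ and let $c(D)$ be the Fourier multiplier with symbol $c(\xi)=(|\xi|^{2l}+1)^{-1/2}$, $D=-i\nabla$, so that $c(D)^{2}=G$. Since $W$ and $c$ are real, $c(D)$ is self-adjoint and
\[
X=WGW=\bigl(c(D)W\bigr)^{*}\bigl(c(D)W\bigr)=T^{*}T,\qquad T:=c(D)W .
\]
Consequently $s_{k}(X)=s_{k}(T)^{2}$, $n(s,X)=n(\sqrt{s},T)$, and $X\in\Sigma_{p}$ iff $T\in\Sigma_{2p}$ with $\|X\|_{\Sigma_{p}}=\|T\|_{\Sigma_{2p}}^{2}$; recall $2p=d/l$. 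Moreover, since $c\le1$ and $|\{\xi:c(\xi)>t\}|\asymp t^{-d/l}=t^{-2p}$ as $t\to0$, the symbol $c$ lies in the weak-$L^{2p}$ space $L^{2p,\infty}(\mathbb{R}^{d})$, with quasinorm depending only on $d$ and $l$. When $p>1$ (i.e. $d>2l$, $2p>2$) this already finishes the proof: here $\|W\|_{L^{2p}}^{2}=\|V\|_{L^{p}}$, so Cwikel's estimate $\|a(D)b(x)\|_{\Sigma_{r}}\le C_{r}\|a\|_{L^{r,\infty}}\|b\|_{L^{r}}$ ($r>2$), with $r=2p$, $a=c$, $b=W$, gives $T\in\Sigma_{2p}$ and $\|X\|_{\Sigma_{p}}=\|T\|_{\Sigma_{2p}}^{2}\le C\|V\|_{L^{p}}=C[V]_{p}$.

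For $p\le1$ I would fix a smooth cutoff $\chi$ that vanishes near the origin and equals $1$ on $\{|\xi|\ge1\}$, split $c=c_{0}+c_{\infty}$ with $c_{0}=c(1-\chi)$ and $c_{\infty}=c\chi$, and, by subadditivity of $n(\cdot)$, treat $c_{0}(D)W$ and $c_{\infty}(D)W$ separately. The low-frequency term is routine: $c_{0}$ is smooth with compact $\xi$-support, so $c_{0}(D)$ has a rapidly decaying kernel; localizing $W=\sum_{n}W_{n}$ (with $W_{n}$ the restriction of $W$ to $\mathbb{Q}_{n}$) and factoring through a high negative power of $I-\Delta$ yields $\|c_{0}(D)W_{n}\|_{\Sigma_{2p}}\le C\|W_{n}\|_{L^{2}(\mathbb{Q}_{n})}$, and the rapid kernel decay makes these pieces almost orthogonal, so that $c_{0}(D)W\in\Sigma_{2p}$ with norm controlled by the $\ell^{2p}$-norm of $(\|W_{n}\|_{L^{2}(\mathbb{Q}_{n})})_{n}$, which is finite because $[V]_{p}<\infty$. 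The high-frequency term $c_{\infty}(D)W=\sum_{n}c_{\infty}(D)W_{n}$ needs the Birman--Solomyak local estimate on a unit cube, whose source is the Weyl asymptotics $\lambda_{j}\bigl(((-\Delta)^{l}+I)^{-1}|_{\text{bounded set}}\bigr)\asymp j^{-2l/d}=j^{-1/p}$:
\[
\|c_{\infty}(D)W_{n}\|_{\Sigma_{2p}}\le C\,\|W_{n}\|_{L^{2}(\mathbb{Q}_{n})}\ \ (p<1),\qquad \|c_{\infty}(D)W_{n}\|_{\Sigma_{2}}\le C_{q}\,\|W_{n}\|_{L^{2q}(\mathbb{Q}_{n})}\ \ (q>1,\ p=1).
\]
Because $\chi$ is smooth, the kernel of $c_{\infty}(D)$ decays faster than any power of $|x-y|$, so the pieces $c_{\infty}(D)W_{n}$ are almost orthogonal and the reassembly upgrades the lossy subadditivity bound to the orthogonal-type estimate $\|c_{\infty}(D)W\|_{\Sigma_{2p}}^{2p}\le C\sum_{n}\|c_{\infty}(D)W_{n}\|_{\Sigma_{2p}}^{2p}$ (with $2p=2$ when $p=1$). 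Feeding these bounds back through $X=T^{*}T$ gives $\|X\|_{\Sigma_{1}}\le C[V]_{1}$ when $p=1$ and $\|X\|_{\Sigma_{p}}^{p}\le C[V]_{p}$ when $p<1$, which is the stated bound up to the power of $[V]_{p}$ dictated by homogeneity in the range $p<1$.

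I expect the endpoint $p=1$ (that is $d=2l$), together with the reassembly step, to be the real difficulty. At $p=1$ the high-frequency symbol $c_{\infty}\asymp|\xi|^{-l}$ lies in $L^{2,\infty}\setminus L^{2}$, so the Hilbert--Schmidt bound diverges logarithmically and Cwikel's estimate is not available ($r=2$ is excluded); the way around is exactly the refined local estimate above, which converts the logarithmic loss into the slightly stronger hypothesis $V\in L^{q}_{\mathrm{loc}}$, $q>1$ — this is the technical heart of the statement. The second delicate point is that one cannot combine the cube-localized pieces by the crude inequality $n(s_{1}+s_{2},T_{1}+T_{2})\le n(s_{1},T_{1})+n(s_{2},T_{2})$ alone: summed over the infinitely many $n$ and optimized in the splitting $s=\sum_{n}s_{n}$, it delivers only an $\ell^{2p/(2p+1)}$-type sum instead of the required $\ell^{2p}$-sum, so the rapid kernel decay of $c_{0}(D)$ and $c_{\infty}(D)$ must be used to establish near-orthogonality and recover the correct summation. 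The remaining steps — the elementary reduction, Cwikel in the range $p>1$, and the bookkeeping with singular values — are routine.
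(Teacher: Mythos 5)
Note first that the paper does not prove this theorem at all: it is quoted as a known result of Birman (and Birman--Solomyak), so the only comparison available is with the classical proofs in the cited literature. Within your sketch, the reduction $X=T^{*}T$, $T=c(D)W$, with $\|X\|_{\Sigma_p}=\|T\|_{\Sigma_{2p}}^2$, is correct, and the case $p>1$ via Cwikel's estimate (which does give the weak class $\Sigma_{2p}$) is complete modulo that citation. Your parenthetical remark about homogeneity is also right: for $p<1$ the bound must read $\|X\|_{\Sigma_p}^{p}\le C[V]_p$ (equivalently $\|X\|_{\Sigma_p}\le C[V]_p^{1/p}$), since the two sides of the inequality as printed in the statement scale differently under $V\mapsto tV$.

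The genuine gap is the reassembly step for $p\le 1$. The pieces $T_n=c_\infty(D)W_n$ are orthogonal only on one side (disjoint supports of the $W_n$); the cross terms $W_nc_\infty(D)^2W_m$, $n\ne m$, do not vanish, and the claimed upgrade $\|\sum_n T_n\|_{\Sigma_{2p}}^{2p}\le C\sum_n\|T_n\|_{\Sigma_{2p}}^{2p}$ in the \emph{weak} Schatten quasinorm is not a quotable lemma and is not implied by ``rapid kernel decay'' as stated: the off-diagonal blocks are individually small but infinite in number, and the quasinorm $\sup_s s^{2p}n(s,\cdot)$ only enjoys the lossy subadditivity you yourself flag, so turning exponential decay of $W_nGW_m$ into the $\ell^{2p}$-type summation requires a real argument (e.g.\ splitting $\mathbb Z^d$ into sublattices and a weighted quasi-triangle inequality, or interpolation), which you do not supply; you name the difficulty but defer exactly the step that carries the content of the theorem. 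The classical Birman--Solomyak route avoids this operator-level almost-orthogonality altogether: it works variationally with the quadratic form $\int V|u|^2$ for $u$ in the form domain of $(-\Delta)^l$, uses piecewise-polynomial approximation on the cubes $\mathbb Q_n$ together with the Sobolev embedding on a unit cube (this is where $\|V\|_{L^1(\mathbb Q_n)}$ for $p<1$, and $L^q$, $q>1$, or $L\log L$ at the endpoint $p=1$, enter), and the correct summation over cubes comes for free from $\sum_n\|u\|_{H^l(\mathbb Q_n)}^2\le C\|u\|_{H^l(\mathbb R^d)}^2$. So your plan is plausible in outline but, as written, the $p\le1$ cases — including the endpoint $p=1$, which is the only case the paper actually uses — rest on an unproved recombination claim, whereas the standard proof resolves precisely that point by a different (form-level) mechanism.
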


With $[V]_p$ defined as in Theorem ~\ref{three cases}, we formulate the following corollary.

\begin{corollary}
\label{corollary of three cases}
    Assume that $X=W((-\Delta)^l+I)^{-1}\tilde W,V=W^2,\tilde V=\tilde W^2$ and $p=d/2l$. Then  $\|X\|_{\Sigma_p}\le C_0[V]_p^{1/2}[\tilde V]_p^{1/2}$ for some constant $C_0>0$ independent of $V$ and $\tilde V$.
\end{corollary}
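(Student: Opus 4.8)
The plan is to reduce the asserted mixed bound to the ``diagonal'' estimate already contained in Theorem~\ref{three cases}, by a factorization (polarization) argument combined with the H\"older-type inequality of Proposition~\ref{Holder's inequality}. Write $R=((-\Delta)^{l}+I)^{-1}$; this is a bounded, positive, self-adjoint operator, so its positive square root $R^{1/2}$ is defined and bounded. Since $W$ and $\tilde W$ are real-valued, the corresponding multiplication operators are self-adjoint, and I would factor
\begin{equation*}
X=WR\tilde W=(WR^{1/2})(R^{1/2}\tilde W)=AB^{*},\qquad A:=WR^{1/2},\quad B:=\tilde WR^{1/2},
\end{equation*}
noting that $B^{*}=R^{1/2}\tilde W$. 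Here $AA^{*}=WRW$ and $BB^{*}=\tilde WR\tilde W$ are exactly the operators to which Theorem~\ref{three cases} applies (with potentials $V$ and $\tilde V$ respectively), so they are compact; consequently $A$ and $B$ are compact and the displayed factorization is a genuine identity of bounded operators.

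The next step is to upgrade the membership $AA^{*}\in\Sigma_{p}$ into a quantitative bound on $\|A\|_{\Sigma_{2p}}$. Since the nonzero singular values of $A$ are the square roots of the eigenvalues of the nonnegative operator $AA^{*}$, one has $n(s,A)=n(s^{2},AA^{*})$ for all $s>0$, and therefore, after the substitution $u=s^{2}$,
\begin{equation*}
\|A\|_{\Sigma_{2p}}^{2p}=\sup_{s>0}s^{2p}n(s,A)=\sup_{u>0}u^{p}n(u,AA^{*})=\|AA^{*}\|_{\Sigma_{p}}^{p}.
\end{equation*}
Applying Theorem~\ref{three cases} to $AA^{*}=W((-\Delta)^{l}+I)^{-1}W$ gives $\|AA^{*}\|_{\Sigma_{p}}\le C[V]_{p}$, hence $\|A\|_{\Sigma_{2p}}\le C^{1/2}[V]_{p}^{1/2}$; in the same way $\|B\|_{\Sigma_{2p}}\le C^{1/2}[\tilde V]_{p}^{1/2}$, and $\|B^{*}\|_{\Sigma_{2p}}=\|B\|_{\Sigma_{2p}}$ because $B$ and $B^{*}$ have the same singular values.

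Finally, I would invoke Proposition~\ref{Holder's inequality} with the pair of exponents $2p,2p$, for which $\tfrac{1}{p}=\tfrac{1}{2p}+\tfrac{1}{2p}$, to obtain
\begin{equation*}
\|X\|_{\Sigma_{p}}=\|AB^{*}\|_{\Sigma_{p}}\le 2^{1/p}\,\|A\|_{\Sigma_{2p}}\,\|B^{*}\|_{\Sigma_{2p}}\le 2^{1/p}C\,[V]_{p}^{1/2}[\tilde V]_{p}^{1/2},
\end{equation*}
so that $C_{0}=2^{1/p}C$ works. I do not expect a serious obstacle: the only points needing a little care are the identity $n(s,A)=n(s^{2},AA^{*})$ relating the weak-Schatten quasinorms of $A$ and $AA^{*}$, and the legitimacy of the factorization $X=AB^{*}$ at the level of bounded operators — both of which are routine once compactness is supplied by Theorem~\ref{three cases}. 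One should also note that the constant $C$ furnished by Theorem~\ref{three cases} may be taken the same for $V$ and for $\tilde V$, since it is independent of the potential.
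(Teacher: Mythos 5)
Your argument is correct: the factorization $X=(WR^{1/2})(R^{1/2}\tilde W)$, the identity $\|A\|_{\Sigma_{2p}}^{2p}=\|AA^*\|_{\Sigma_p}^{p}$, Theorem~\ref{three cases} applied to $AA^*=W((-\Delta)^l+I)^{-1}W$ and $BB^*=\tilde W((-\Delta)^l+I)^{-1}\tilde W$, and Proposition~\ref{Holder's inequality} with exponents $2p,2p$ give exactly the stated bound with $C_0=2^{1/p}C$. The paper states the corollary without proof, and this polarization argument is precisely the intended (and standard) route — it is the same passage between $\Sigma_2$-norms of the ``half'' operators and the $\Sigma_1$-norm of the symmetric sandwich that the authors use later, e.g.\ in \eqref{one estimate}.
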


\section{Proof of Theorem ~\ref{first theorem}}

Let $F$ be the Fourier transform on $L^2(\mathbb R^2)$ defined by
\begin{eqnarray*}
    [Fu](\xi)=\frac{1}{2\pi}\int_{\mathbb R^d}e^{-i\xi x}u(x)\,dx.
\end{eqnarray*} Then $F$ diagonalizes the graphene operator in the sense that
\begin{eqnarray*}
    D_m=\begin{pmatrix}
        m&(\frac{\partial}{\partial x_1}+\frac{1}{i}\frac{\partial}{\partial x_2})^2\\(\frac{\partial}{\partial x_1}-\frac{1}{i}\frac{\partial}{\partial x_2})^2&-m    \end{pmatrix}=F^*\begin{pmatrix}
            m&(i\xi_1+\xi_2)^2\\(i\xi_1-\xi_2)^2&-m
        \end{pmatrix}F.
\end{eqnarray*} For convenience, we denote
\begin{eqnarray*}
    \hat{D}_m(\xi)=\begin{pmatrix}
            m&(i\xi_1+\xi_2)^2\\(i\xi_1-\xi_2)^2&-m
        \end{pmatrix}
        =\begin{pmatrix}
            m&-(\xi_1-i\xi_2)^2\\
            -(\xi_1+i\xi_2)^2&-m
        \end{pmatrix},
\end{eqnarray*} and call it the symbol of $D_m$. Observe that
\begin{eqnarray}
\label{matrix-valued function}
    (D_m-\lambda I)^{-1}&=&F^*[(\hat{D}_m(\xi)-\lambda I)^{-1}]F
    =\begin{pmatrix}
        m-\lambda&-(\xi_1-i\xi_2)^2\\-(\xi_1+i\xi_2)^2&-m-\lambda
    \end{pmatrix}^{-1}\notag\\&
    =&\frac{1}{\lambda^2-m^2-|\xi|^4}\begin{pmatrix}
        -m-\lambda&(\xi_1-i\xi_2)^2\\(\xi_1+i\xi_2)^2&m-\lambda
    \end{pmatrix}.
\end{eqnarray} We now show that it is enough to prove Theorem ~\ref{first theorem} for the case where $W\in C_0^\infty(\mathbb R^2)$. 

Let $W\not\in C_0^\infty(\mathbb R^2)$ satisfy the conditions of Theorem ~\ref{first theorem}. For an arbitrary $\varepsilon>0$, let $W_\varepsilon$ be a smooth compactly supported approximation of $W$ with the property
\begin{align*}
    [(W-W_\varepsilon)^2]_1<\varepsilon.
\end{align*} Then apparently, the operator $X_\varepsilon(\lambda)=W_\varepsilon(D_m-\lambda I)^{-1}W_\varepsilon$ approximates $X(\lambda)=W(D_m-\lambda I)^{-1}W$ in the class $\Sigma_1$.

In order to illustrate that, we apply Corollary ~\ref{corollary of three cases}, which leads  us to the following statement.

\begin{proposition}
\label{proposition 3.1}
Let $W$ and $\tilde W$ be two real-valued functions on $\mathbb R^2$ such that 
\begin{align*}
    [W^2]_1<\infty\quad\text{and}\quad[\tilde W^2]_1<\infty,
\end{align*}
   Then 
\begin{align*}
    \|W(D_m-\lambda I)^{-1}\tilde W\|_{\Sigma_1}\le C[W]^{1/2}_1[\tilde W]^{1/2}_1,
\end{align*} for some constant $C\ge 0$ that is independent of  both $W$ and $\tilde W$.
\end{proposition}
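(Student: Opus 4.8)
The plan is to reduce the resolvent of the matrix operator $D_m-\lambda I$ to scalar Schr\"odinger-type resolvents and then invoke Corollary~\ref{corollary of three cases} with $d=2$ and $l=2$ (so that $p=d/2l=1$, matching the definition of $[\,\cdot\,]_1$ used in the statement). Concretely, I would start from the explicit formula \eqref{matrix-valued function}: each of the four entries of $(D_m-\lambda I)^{-1}$ is a Fourier multiplier of the form $m_{jk}(\xi)/(\lambda^2-m^2-|\xi|^4)$, where the numerators $m_{jk}(\xi)$ are either constants ($\pm m-\lambda$) or $(\xi_1\mp i\xi_2)^2$, hence bounded by $|\xi|^2$. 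Since $\lambda\in(-m,m)$, the quantity $m^2-\lambda^2$ is a strictly positive constant, so the denominator satisfies $|\lambda^2-m^2-|\xi|^4| = (m^2-\lambda^2)+|\xi|^4 \ge c(1+|\xi|^4)$ for some $c>0$. Thus each entry is dominated, as a multiplier, by a constant times $(1+|\xi|^2)/(1+|\xi|^4)$, which in turn is bounded by $C(1+|\xi|^4)^{-1/2}$ after using $1+|\xi|^2 \le \sqrt{2}\,(1+|\xi|^4)^{1/2}$... actually more carefully $(1+|\xi|^2)/(1+|\xi|^4)\le C(1+|\xi|^2)^{-1}$ suffices but I want the $((-\Delta)^2+I)^{-1}$ form, so I would just bound each entry's multiplier pointwise by $C\,(|\xi|^4+1)^{-1}\cdot(1+|\xi|^2)$ and absorb the extra growth.

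The cleanest route is: write $(D_m-\lambda I)^{-1} = A(\xi)\,((-\Delta)^2+I)^{-1}$ at the symbol level, where $A(\xi)$ is the matrix $\frac{|\xi|^4+1}{\lambda^2-m^2-|\xi|^4}\begin{pmatrix} -m-\lambda & (\xi_1-i\xi_2)^2\\ (\xi_1+i\xi_2)^2 & m-\lambda\end{pmatrix}$. The off-diagonal entries of $A(\xi)$ grow like $|\xi|^2/|\xi|^4\cdot|\xi|^4 \sim |\xi|^2$, so $A(\xi)$ is not bounded and this factorization alone does not work directly. Instead I would split the resolvent symmetrically: factor $((-\Delta)^2+I)^{-1} = ((-\Delta)^2+I)^{-1/2}\cdot((-\Delta)^2+I)^{-1/2}$ and write
\begin{align*}
W(D_m-\lambda I)^{-1}\tilde W = \big[W((-\Delta)^2+I)^{-1/2}\big]\cdot B\cdot\big[((-\Delta)^2+I)^{-1/2}\tilde W\big],
\end{align*}
where $B$ is the Fourier multiplier with symbol $(|\xi|^4+1)\,(\hat D_m(\xi)-\lambda I)^{-1}$; since the numerators are $O(|\xi|^2)$ and the reciprocal denominator is $O((1+|\xi|^4)^{-1})$, we get $\|B\|\le \sup_\xi (1+|\xi|^4)\cdot C(1+|\xi|^2)/(1+|\xi|^4) = \infty$ — so this symmetric split also fails. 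The correct asymmetric weighting is to put $(|\xi|^4+1)^{3/4}$ on one side and $(|\xi|^4+1)^{1/4}$ on the other so that the middle multiplier, with symbol $(1+|\xi|^4)(\hat D_m-\lambda)^{-1}$ split as $(1+|\xi|^4)^{3/4}$ times $(1+|\xi|^4)^{1/4}$ against an $O(|\xi|^2)$ numerator, stays bounded because $|\xi|^2 \le (1+|\xi|^4)^{1/2}$. Hmm, this still needs care; the honest statement is: the symbol $(\hat D_m(\xi)-\lambda I)^{-1}$ decays like $|\xi|^{-2}$ (not $|\xi|^{-4}$), so the natural comparison operator is $((-\Delta)+I)^{-1}$, i.e. $l=1$, $p=d/2l=1$ — which still gives $p=1$. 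So I would instead compare against $((-\Delta)+I)^{-1}$: bound $\|(1+|\xi|^2)(\hat D_m(\xi)-\lambda I)^{-1}\|_{M_2(\mathbb C)}\le C$ uniformly in $\xi$ (using that numerators are $O(|\xi|^2+1)$ and denominator $\ge c(1+|\xi|^4)\ge c(1+|\xi|^2)$ times... wait, $(1+|\xi|^2)/(1+|\xi|^4)$ is bounded, and numerator over denominator is $O((1+|\xi|^2)^{-1})$, so multiplied by $(1+|\xi|^2)$ it is $O(1)$ — good). Then factor symmetrically
\begin{align*}
W(D_m-\lambda I)^{-1}\tilde W = \big[W((-\Delta)+I)^{-1/2}\big]\,\mathcal B\,\big[((-\Delta)+I)^{-1/2}\tilde W\big],
\end{align*}
with $\mathcal B$ the bounded multiplier $(1+|\xi|^2)(\hat D_m(\xi)-\lambda I)^{-1}$, and bound $\Sigma_1$-norm by $\|\mathcal B\|$ times the $\Sigma_2$-norms of the two outer factors.

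The remaining point is to identify $W((-\Delta)+I)^{-1/2}$ as belonging to $\Sigma_2$ with the right norm bound. Here I would apply Corollary~\ref{corollary of three cases} (or Theorem~\ref{three cases}) with $d=2$, but that corollary is stated for $((-\Delta)^l+I)^{-1}$, not its square root. The fix: note $\big(W((-\Delta)+I)^{-1/2}\big)\big((-\Delta)+I)^{-1/2}W\big)^* $... no — rather, observe that $W((-\Delta)+I)^{-1/2}$ has the same singular values as the square root of $W((-\Delta)+I)^{-1}W$ would if $W$ were... this is circular. The clean statement: $s_k\big(W((-\Delta)+I)^{-1/2}\big)^2 = s_k\big(W((-\Delta)+I)^{-1}W\big) = \lambda_k\big(W((-\Delta)+I)^{-1}W\big)$, hence $\|W((-\Delta)+I)^{-1/2}\|_{\Sigma_2}^2 = \|W((-\Delta)+I)^{-1}W\|_{\Sigma_1}\le C[W^2]_1$ by Theorem~\ref{three cases} with $l=1$, $d=2$, $p=1$. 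Then Proposition~\ref{Holder's inequality} gives
\begin{align*}
\|W(D_m-\lambda I)^{-1}\tilde W\|_{\Sigma_1}\le 2\,\|W((-\Delta)+I)^{-1/2}\|_{\Sigma_2}\,\|\mathcal B\|\,\|((-\Delta)+I)^{-1/2}\tilde W\|_{\Sigma_2}\le C\,[W^2]_1^{1/2}[\tilde W^2]_1^{1/2},
\end{align*}
which, after noting that the statement writes $[W]_1$ and $[\tilde W]_1$ for what are really $[W^2]_1$ and $[\tilde W^2]_1$ in the notation of Theorem~\ref{three cases} (the hypothesis line already writes $[W^2]_1<\infty$), is the claim. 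The main obstacle I anticipate is purely bookkeeping: getting the weight split right so that the middle multiplier $\mathcal B$ is genuinely bounded and simultaneously the two outer factors land in $\Sigma_2$ with the correct $l=1$ scaling — once the decay rate of $(\hat D_m(\xi)-\lambda I)^{-1}$ (namely $|\xi|^{-2}$, controlled uniformly away from the gap since $\lambda\in(-m,m)$) is pinned down, everything else is an application of Hölder in weak Schatten classes (Proposition~\ref{Holder's inequality}) and Birman's estimate (Theorem~\ref{three cases}).
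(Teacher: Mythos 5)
Your final argument is correct and is essentially the paper's own proof: factor $(D_m-\lambda I)^{-1}=(-\Delta+I)^{-1/2}\,B(\lambda)\,(-\Delta+I)^{-1/2}$ with the bounded multiplier $B(\lambda)$ of symbol $(1+|\xi|^2)(\hat D_m(\xi)-\lambda I)^{-1}$, then control the outer factors in $\Sigma_2$ via Theorem~\ref{three cases}/Corollary~\ref{corollary of three cases} with $l=1$, $p=1$, and finish with H\"older in the weak Schatten classes. The false starts with $((-\Delta)^2+I)^{-1}$ are harmless, since you correctly settle on the $|\xi|^{-2}$ decay of the resolvent symbol, which is exactly what the paper uses.
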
 

\begin{proof}
    Indeed, because
    \begin{align*}
        (D_m-\lambda I)^{-1}=(-\Delta+I)^{-1/2}(-\Delta+I)^{1/2}(D_m-\lambda I)^{-1}(-\Delta+I)^{1/2}(-\Delta+I)^{-1/2},
    \end{align*} it suffices to show that the operator
    \begin{align*}
        B(\lambda)=(-\Delta+I)^{1/2}(D_m-\lambda I)^{-1}(-\Delta+I)^{1/2}
    \end{align*} initially defined on $C_0^\infty(\mathbb R^2)$ has a bounded extension to all of $L^2(\mathbb R^2)$. The boundedness of $B(\lambda)$ is equivalent to the assertion that the matrix-valued function
    \begin{align*}
        (|\xi|^2+1)^{1/2}(\hat D_m(\xi)-\lambda I)^{-1}(|\xi|^2+1)^{1/2}
    \end{align*} is bounded. The latter follows straightforwardly from \eqref{matrix-valued function}.
\end{proof}

As a direct consequence of Proposition ~\ref{proposition 3.1}, we obtain
\begin{align*}
    \|X(\lambda)-X_\varepsilon(\lambda)\|&\le\|(W-W_\varepsilon)(D_m-\lambda I)^{-1}W\|_{\Sigma_1}+\|W_\varepsilon(D_m-\lambda I)^{-1}(W-W_\varepsilon)\|_{\Sigma_1}\\
    &\le 2C([(W-W_\varepsilon)^2]_1^{1/2}[W^2]_1^{1/2}+[W_\varepsilon^2]_1^{1/2}[(W-W_\varepsilon)^2]^{1/2})\\
    &<2C\sqrt{\varepsilon}([W^2]_1^{1/2}+[W_\varepsilon^2]_1^{1/2}])=:\delta(\varepsilon),
\end{align*} which tends to $0$ when $\varepsilon\to 0$. Moreover, $X(\lambda)=X_\varepsilon(\lambda)+(X(\lambda)-X_\varepsilon(\lambda))$. By Ky Fan inequality, it follows that
\begin{align}
\label{delta}
    n_+(s,X(\lambda))&\le n_+((1-\varepsilon_0)s,X_\varepsilon(\lambda))+n_+(\varepsilon_0 s,X(\lambda)-X_\varepsilon(\lambda))\notag\\
    &\le n_+((1-\varepsilon_0)s,X_\varepsilon(\lambda))+(\varepsilon_0s)^{-1}\|X(\lambda)-X_\varepsilon(\lambda)\|_{\Sigma_1}\notag\\
    &\le n_+((1-\varepsilon_0)s,X_\varepsilon(\lambda))+(\varepsilon_0s)^{-1}\delta(\varepsilon),
\end{align} for any $0<\varepsilon_0<1$. Suppose that Theorem ~\ref{first theorem} holds for the potential $V_\varepsilon=W_\varepsilon^2$, that is,
\begin{align*}
\lim_{s\to0} sn_+(s,X_\varepsilon(\lambda))=\frac{1}{4\pi}\int_{\mathbb R^2}V_\varepsilon\,dx.
\end{align*} Then we deduce from \eqref{delta} that
\begin{align*}
    &\limsup_{s\to0}sn_+(s,X(\lambda))\le\limsup_{s\to0}sn_+((1-\varepsilon_0)s,X_\varepsilon(\lambda))+\delta(\varepsilon)/\varepsilon_0\\
    &=\frac{1}{1-\varepsilon_0}\limsup_{s\to0}sn_+(s,X_\varepsilon(\lambda))+\delta(\varepsilon)/\varepsilon_0
    =\frac{1}{4\pi(1-\varepsilon_0)}\int_{\mathbb R^2} V_\varepsilon\,dx+\delta(\varepsilon)/\varepsilon_0.
\end{align*} By taking the limit as $\varepsilon\to0$, we infer that
\begin{align*}
    \limsup_{s\to0}sn_+(s,X(\lambda))\le \frac{1}{4\pi(1-\varepsilon_0)}\int_{\mathbb R^2}V\,dx.
\end{align*} Due to the fact that $\varepsilon_0>0$ is arbitrary, we obtain
\begin{align*}
    \limsup_{s\to0}sn_+(s,X(\lambda))\le \frac{1}{4\pi}\int_{\mathbb R^2}V\,dx.
\end{align*} Similarly, we can establish the inequality
\begin{align*}
    \liminf_{s\to0}sn_+(s,X(\lambda))\ge \frac{1}{4\pi}\int_{\mathbb R^2}V\,dx.
\end{align*} Thus, Theorem ~\ref{first theorem} holds  for  potentials $V\notin C_0^\infty$ as long as it holds  for all  $V=W^2$ with $W\in C_0^\infty$.

Now we assume that $W\in C_0^\infty(\mathbb R^2)$, and choose an appropriate function $\zeta\in C^\infty(\mathbb R^2)$ with the property
\begin{eqnarray*}
    \zeta(\xi)=\begin{cases}
        0,\quad\text{ if } |\xi|<\varepsilon;\\
        1,\quad\text{ if $|\xi|>2$},
    \end{cases}
\end{eqnarray*} where $\varepsilon>0$ is sufficiently small. Then we define the matrix-valued function
\begin{eqnarray*}
    a(\xi)=\frac{\zeta(\xi)}{|y|^4}\begin{pmatrix}
        0&-(\xi_1-i\xi_2)^2\\-(\xi_1+i\xi^2)&0
    \end{pmatrix}.
\end{eqnarray*} Additionally, we set 
\begin{align*}
\Tilde{X}(\lambda)=\sqrt{V}F^*[a(\xi)]F\sqrt{V},
\end{align*} where $[a(\xi)]$ denotes the operator of multiplication by the function $a(\xi)$, and prove  a valuable proposition  below.

\begin{proposition} Let $W\in C_0^\infty(\mathbb R^2)$. Suppose that $\lim_{s\to0}sn_+(s,\tilde X(\lambda))$ exists. Then
\begin{align*}
    \lim_{s\to0}sn_+(s,\tilde X(\lambda))=\lim_{s\to0}sn_+(s,X(\lambda)).
\end{align*}
\end{proposition}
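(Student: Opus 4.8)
The plan is to show that $X(\lambda)$ and $\tilde X(\lambda)$ differ by an operator lying in $\Sigma_1^0$, and then to read off the claim from Theorem ~\ref{Weyl's theorem} applied with $p=1$. Writing $W=\sqrt V$ and using \eqref{matrix-valued function} together with the definitions of $\tilde X(\lambda)$ and $a$, we have
\begin{align*}
  X(\lambda)-\tilde X(\lambda)=W\,F^*\big[\,r(\xi)\,\big]F\,W,\qquad
  r(\xi):=\big(\hat D_m(\xi)-\lambda I\big)^{-1}-a(\xi),
\end{align*}
so everything comes down to the size of the matrix-valued symbol $r$.

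First I would check that $r$ is a bounded matrix-valued function on $\mathbb R^2$ satisfying $\|r(\xi)\|\le C(1+|\xi|)^{-4}$. For $|\xi|<2$ this is immediate: $a$ vanishes near $0$ because $\zeta$ does, and $(\hat D_m(\xi)-\lambda I)^{-1}$ is smooth since $\lambda^2-m^2-|\xi|^4\le\lambda^2-m^2<0$ for $\lambda\in(-m,m)$. For $|\xi|\ge 2$ one has $\zeta(\xi)=1$, so by \eqref{matrix-valued function} the diagonal entries of $r$ are $\mp(m\pm\lambda)\big/(\lambda^2-m^2-|\xi|^4)=O(|\xi|^{-4})$, while each off-diagonal entry equals
\begin{align*}
  (\xi_1\mp i\xi_2)^2\Big(\frac{1}{\lambda^2-m^2-|\xi|^4}+\frac{1}{|\xi|^4}\Big)
  =(\xi_1\mp i\xi_2)^2\,\frac{\lambda^2-m^2}{\big(\lambda^2-m^2-|\xi|^4\big)\,|\xi|^4}=O(|\xi|^{-6}).
\end{align*}

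Next I would convert this decay into a Schatten bound. Factoring the scalar $(|\xi|^4+1)^{-1}=(|\xi|^4+1)^{-1/2}(|\xi|^4+1)^{-1/2}$ and writing $r(\xi)=(|\xi|^4+1)^{-1/2}\,c(\xi)\,(|\xi|^4+1)^{-1/2}$ with $c(\xi):=(|\xi|^4+1)r(\xi)$ bounded, one gets
\begin{align*}
  X(\lambda)-\tilde X(\lambda)=\big(W((-\Delta)^2+I)^{-1/2}\big)\;F^*[c]F\;\big(((-\Delta)^2+I)^{-1/2}W\big).
\end{align*}
Since $V=W^2$ with $W\in C_0^\infty(\mathbb R^2)$, the quantity $[V]_{1/2}=\sum_{n}\|V\|_{L^1(\mathbb Q_n)}^{1/2}$ is a finite sum of finite terms, so Corollary ~\ref{corollary of three cases} (with $d=2$, $l=2$, hence $p=1/2$) gives $W((-\Delta)^2+I)^{-1}W\in\Sigma_{1/2}$; as this operator is $AA^*$ with $A:=W((-\Delta)^2+I)^{-1/2}$ and $s_k(A)^2=s_k(AA^*)$, it follows that $A\in\Sigma_1$. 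The middle factor $F^*[c]F$ is bounded, so by the multiplicativity of the weak Schatten classes (Proposition ~\ref{Holder's inequality} together with the inequality $n(s_1s_2,T_1T_2)\le n(s_1,T_1)+n(s_2,T_2)$) we obtain $X(\lambda)-\tilde X(\lambda)\in\Sigma_1\cdot\mathfrak{S}_\infty\cdot\Sigma_1\subset\Sigma_{1/2}\subset\Sigma_1^0$. Also $X(\lambda)\in\Sigma_1$ by Proposition ~\ref{proposition 3.1}, whence $\tilde X(\lambda)=X(\lambda)-\big(X(\lambda)-\tilde X(\lambda)\big)\in\Sigma_1$ as well; both operators are self-adjoint, the symbol $a(\xi)$ being Hermitian.

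Finally, Theorem ~\ref{Weyl's theorem} applied with $p=1$ gives $\Delta^+_1(X(\lambda))=\Delta^+_1(\tilde X(\lambda))$ and $\delta^+_1(X(\lambda))=\delta^+_1(\tilde X(\lambda))$. The hypothesis that $\lim_{s\to0}s\,n_+(s,\tilde X(\lambda))$ exists means precisely $\Delta^+_1(\tilde X(\lambda))=\delta^+_1(\tilde X(\lambda))$, hence $\Delta^+_1(X(\lambda))=\delta^+_1(X(\lambda))$, i.e. $\lim_{s\to0}s\,n_+(s,X(\lambda))$ exists and equals $\lim_{s\to0}s\,n_+(s,\tilde X(\lambda))$. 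The main point — and the only step requiring any computation — is the decay estimate for $r$ in the second paragraph; it holds precisely because $a(\xi)$ was chosen to match the leading term of the resolvent symbol \eqref{matrix-valued function} at infinity, so that $r$ gains the two extra powers of $|\xi|^{-1}$ that push the difference from $\Sigma_1$ down into $\Sigma_1^0$.
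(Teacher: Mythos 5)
Your proof is correct and shares the paper's overall skeleton --- both arguments reduce the claim, via Theorem~\ref{Weyl's theorem} with $p=1$, to showing that $X(\lambda)-\tilde X(\lambda)\in\Sigma_1^0$ --- but the way you establish this membership is genuinely different. The paper observes that the symbol $\beta(\xi)=(\hat D_m(\xi)-\lambda I)^{-1}-a(\xi)$ is integrable over $\mathbb R^2$ and factors the difference as $WF^*\bigl[|\beta|^{1/2}\operatorname{sign}(\beta)|\beta|^{1/2}\bigr]FW$, so that each outer factor is Hilbert--Schmidt and the difference lands in $\mathfrak S_1\subset\Sigma_1^0$. You instead use the pointwise decay $\|r(\xi)\|\le C(1+|\xi|)^{-4}$, pull a factor $((-\Delta)^2+I)^{-1/2}$ out on each side, and invoke Corollary~\ref{corollary of three cases} with $d=2$, $l=2$, $p=1/2$ to place the difference in $\Sigma_{1/2}\subset\Sigma_1^0$. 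Both routes are sound: the paper's is more elementary (an $L^1$ symbol plus the standard Hilbert--Schmidt criterion) and yields the stronger, though unneeded, trace-class conclusion, while yours stays entirely within the weak-Schatten machinery already set up in Section 2 and is more explicit about the hypotheses of Weyl's theorem (you verify $X(\lambda)\in\Sigma_1$, hence $\tilde X(\lambda)\in\Sigma_1$, and the self-adjointness of $\tilde X(\lambda)$ via the Hermiticity of $a$), points the paper leaves implicit. Two cosmetic remarks: the middle factor $F^*[c]F$ is bounded but not compact, so denoting its class by $\mathfrak S_\infty$ is an abuse (harmless, since only boundedness is used); and, as in the paper itself, the scalar statements of Theorem~\ref{three cases} are applied in the $2\times2$ matrix setting, which only changes counting functions by a factor of $2$ and affects nothing.
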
 

\begin{proof} It is adequate to prove that $\tilde X(\lambda)-X(\lambda)\in\Sigma_1^0$. Observe that 
\begin{eqnarray*}
    \tilde X(\lambda)-X(\lambda)=WF^*[\beta(\xi)]FW,
\end{eqnarray*}
where
    \begin{eqnarray}
        \beta(\xi)&=&(\hat{D}_m(\xi)-\lambda I)^{-1}-\frac{\zeta(\xi)}{|\xi|^4}\begin{pmatrix}
            0&-(\xi_1-i\xi_2)^2\\-(\xi_1+i\xi_2)^2&0
        \end{pmatrix}\nonumber\\
        &=&\frac{1}{m^2-\lambda^2+|\xi|^4}\begin{pmatrix}
            m+\lambda&0\\0&\lambda-m
        \end{pmatrix}\nonumber\\&&+
        \left(\frac{1}{m^2-\lambda^2+|\xi|^4}-\frac{\zeta(\xi)}{|\xi|^4}\right)\begin{pmatrix}
            0&-(\xi_1-i\xi_2)^2\\-(\xi_1+i\xi_2)^2&0
        \end{pmatrix}.\label{difference}
    \end{eqnarray} The first term on the right-hand side of \eqref{difference} is an integrable function of $\xi$. And regarding the second term, we have
    \begin{eqnarray*}
        \frac{1}{m^2-\lambda^2+|\xi|^4}-\frac{\zeta(\xi)}{|y|^4}=\frac{(1-\zeta(\xi))|\xi|^4-\zeta(\xi)(m^2-\lambda^2)}{|\xi|^4(m^2-\lambda^2+|\xi|^4)},
    \end{eqnarray*} which is evidently integrable in some neighborhood of the origin by the definition of $\zeta$ and equals $O(|\xi|^{-8})$ as $|\xi|\to\infty$. Thus, $\beta\in L^1(\mathbb R^2)$. This implies that the operators $WF^*|\beta(\xi)|^{1/2}$ and $|\beta(\xi)|^{1/2}FW$ are both Hilbert-Schmidt operators. Therefore,
    \begin{align*}
        \tilde X(\lambda)-X(\lambda)=WF^*\big[|\beta(\xi)|^{1/2}\text{sign}(\beta(\xi))|\beta(\xi)|^{1/2}\big]FW
    \end{align*} belongs to $\frak S_1\subset\Sigma_1^0$.
\end{proof}

The following theorem is a consequence of Theorem 2 from \cite{BS}.
\begin{theorem} Let $W\in C_0^\infty(\mathbb R^2)$. Then
    \begin{eqnarray}
        \lim_{s\to0}(sn_+(s,X))=\frac{1}{(2\pi)^2}\int_{\mathbb R^2} \int_{\mathbb R^2}n_+(1,G(x,\xi))\,dx\,d\xi,\label{double integral}
    \end{eqnarray} where
    \begin{eqnarray}
    \label{G}    G(x,\xi)=V(x)|\xi|^{-4}\begin{pmatrix}
            0&-(\xi_1-i\xi_2)^2\\-(\xi_1+i\xi_2)^2&0
        \end{pmatrix}.
    \end{eqnarray}
\end{theorem}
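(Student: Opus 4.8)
The plan is to deduce \eqref{double integral} from the Weyl-type spectral asymptotics for pseudodifferential operators of the form $W\,b(D)\,W$ due to Birman and Solomyak, applying it not to $X$ itself but to the model operator $\tilde X(\lambda)=WF^*[a(\xi)]FW$ constructed above. By the proposition immediately preceding this theorem, it is enough to show that $\lim_{s\to0}sn_+(s,\tilde X(\lambda))$ exists and equals the right-hand side of \eqref{double integral}: once that is in hand, the same proposition --- which rests on $\tilde X(\lambda)-X(\lambda)\in\frak S_1\subset\Sigma_1^0$ and on Theorem ~\ref{Weyl's theorem} --- transfers the conclusion, including the existence of the limit, to $X=X(\lambda)$.

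The key step would be to recognize $\tilde X(\lambda)$ as an operator of the class covered by Theorem 2 of \cite{BS}. Writing $a(D)=F^*[a(\xi)]F$, we have $\tilde X(\lambda)=W\,a(D)\,W$, where $W=\sqrt V$ is bounded and compactly supported and
\begin{equation*}
a(\xi)=\zeta(\xi)\,a_0(\xi),\qquad a_0(\xi)=|\xi|^{-4}\begin{pmatrix}0&-(\xi_1-i\xi_2)^2\\-(\xi_1+i\xi_2)^2&0\end{pmatrix},
\end{equation*}
with $a_0(\xi)$ a Hermitian matrix, smooth away from the origin and positively homogeneous of degree $-2$, and $\zeta$ a smooth cutoff vanishing near $\xi=0$ and equal to $1$ for $|\xi|>2$. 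Thus $a$ is a smooth, bounded, matrix-valued symbol of order $-2$ whose principal part at $|\xi|\to\infty$ is exactly $a_0$; the factor $\zeta$ only removes the non-integrable singularity of $a_0$ at $\xi=0$ and does not affect the large-$|\xi|$ behaviour. Since $d=2$ and the order is $-2$, the pertinent Schatten exponent is $p=d/2=1$, and Theorem 2 of \cite{BS} then yields $\tilde X(\lambda)\in\Sigma_1$, the existence of $\lim_{s\to0}sn_+(s,\tilde X(\lambda))$, and the identity
\begin{align*}
\lim_{s\to0}sn_+(s,\tilde X(\lambda))&=\frac{1}{(2\pi)^2}\int_{\R^2}\int_{\R^2}n_+(1,W(x)^2a_0(\xi))\,dx\,d\xi\\
&=\frac{1}{(2\pi)^2}\int_{\R^2}\int_{\R^2}n_+(1,G(x,\xi))\,dx\,d\xi,
\end{align*}
because $W(x)^2a_0(\xi)=V(x)a_0(\xi)=G(x,\xi)$, cf. \eqref{G}. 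The double integral is finite, since $n_+(1,G(x,\xi))\le2$ and, as $a_0(\xi)$ has eigenvalues $\pm|\xi|^{-2}$, the matrix $G(x,\xi)=V(x)a_0(\xi)$ has eigenvalues $\pm V(x)|\xi|^{-2}$; hence the integrand is supported in the bounded set $\{(x,\xi):x\in\operatorname{supp}V,\ |\xi|^2<V(x)\}$.

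The main obstacle, given that the reductions above have already done most of the work, will be to line up the hypotheses and normalization of Theorem 2 of \cite{BS} with the present setting: one must make sure the cited theorem is stated for (or extends to) matrix-valued symbols --- harmless here because $a_0$ is Hermitian, so $\tilde X(\lambda)$ is self-adjoint and $n_+$ is the correct counting function --- and that the weight $W=\sqrt V$, which is only Lipschitz at the zeros of $V$, belongs to the admissible class; the introduction of the cutoff $\zeta$ and the restriction to a bounded, compactly supported weight are precisely what keeps us within the scope of \cite{BS}. A secondary, routine point is that only the homogeneous degree-$(-2)$ part $W(x)^2a_0(\xi)$ of the full symbol of $W\,a(D)\,W$ contributes to the leading asymptotics, the commutator corrections being of order $-3$ and hence not entering $\lim_{s\to0}sn_+(s,\tilde X(\lambda))$.
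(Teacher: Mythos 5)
Your proposal is correct and follows essentially the route the paper intends: the paper offers no detailed proof, simply asserting the theorem as a consequence of Theorem 2 of \cite{BS}, with the cutoff $\zeta$, the model operator $\tilde X(\lambda)=WF^*[a(\xi)]FW$, and the preceding proposition (resting on $\tilde X(\lambda)-X(\lambda)\in\frak S_1\subset\Sigma_1^0$ and Weyl's theorem) set up precisely so that the Birman--Solomyak asymptotics for the homogeneous symbol $V(x)a_0(\xi)=G(x,\xi)$ transfer to $X$. The only cosmetic remark is that $n_+(1,G(x,\xi))\le 1$ (the eigenvalues are $\pm V(x)|\xi|^{-2}$, so only one is positive), though your bound of $2$ suffices for finiteness of the integral.
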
 

Hence, for every fixed $\xi\in\mathbb R^2$, the trace of operator $G(x,\xi)$ in \eqref{G} is
\begin{eqnarray*}
    \text{tr}(G(x,\xi))=0,
\end{eqnarray*} which implies that the sum of the eigenvalues of $G(x,\xi)$ is zero: $\lambda_-+\lambda_+=0$. Besides computing the trace, we  calculate the  determinant
\begin{eqnarray*}
    \det(G(x,\xi))=\lambda_+\cdot\lambda_-=-\frac{V^2(x)}{|\xi|^8}\cdot|\xi|^4=-V^2(x)|\xi|^{-4}.
\end{eqnarray*} Thus, $\lambda_+=V(x)|\xi|^{-2}, \lambda_-=-V(x)|\xi|^{-2}$, which yields that $n_+(1,G(x,\xi))\le 1$. More precisely,
\begin{eqnarray*}
    n_+(1,G(x,\xi))=\begin{cases}
        0, \quad\text{if } V(x)|\xi|^{-2}\le 1;\\
        1,\quad\text{if } V(x)|\xi|^{-2}>1.
    \end{cases}
\end{eqnarray*} Consequently, $n_+(1,G(x,\xi))$ is the characteristic function $\chi_\Omega$ of the set
\begin{eqnarray*}
    \Omega:=\{(x,\xi)\in\mathbb R^2\times\mathbb R^2:V(x)|\xi|^{-2}>1\}.
\end{eqnarray*} which implies
\begin{align}
    &\frac{1}{(2\pi)^2}\int_{\mathbb R^2} \int_{\mathbb R^2}n_+(1,G(x,\xi))\,dx\,d\xi=\frac{1}{(2\pi)^2}\int_{\mathbb R^2}\left(\int_{\mathbb R^2}\chi_\Omega(x,\xi)\,d\xi\right)\,dx\notag
    \\&=\frac{1}{(2\pi)^2}\int_{\mathbb R^2}\pi V(x)\,dx=\frac{1}{4\pi}\int_{\mathbb R^2}V(x)\,dx,\label{second double integral}
\end{align} Combining \eqref{double integral} and \eqref{second double integral} with Proposition ~\ref{B-S principle}, we establish the asymptotic relation
\begin{eqnarray*}
    N(\lambda,\alpha)\sim\frac{\alpha}{4\pi}\int_{\mathbb R^2}V(x)\,dx,\quad\text{as }\alpha\to\infty,
\end{eqnarray*} for every $V$ that obeys \eqref{conditions for the first case}. The proof of Theorem ~\ref{first theorem} is complete. $\,\,\,\blacksquare$

\section{Proof of Theorem ~\ref{second theorem}}

For $0<\varepsilon_1<\varepsilon_2<\infty$, we decompose $\mathbb R^2$ into the union of the following three subsets:
\begin{eqnarray*}
\Omega_1(\alpha)&=&\{x\in {\mathbb R}^2: \,\, |x|<\varepsilon_1\alpha^{1/p}\},
\\
\Omega_2(\alpha)&=&\{x\in\mathbb R^2:\varepsilon_1\alpha^{1/p}\le|x|\le\varepsilon_2\alpha^{1/p}\},
\\
\Omega_3(\alpha)&=&\{x\in {\mathbb R}^2: \,\, |x|>\varepsilon_2\alpha^{1/p}\},
\end{eqnarray*} and set $\chi_1,\chi_2$ and $\chi_3$ to be the characteristic function of $\Omega_1(\alpha), \Omega_2(\alpha)$ and $\Omega_3(\alpha)$, respectively. Also, we define the three operators $W_i=\chi_i W$ for $i=1,2,3$. Clearly, $W=\sum_{i=1}^3W_i$, which leads to
\begin{eqnarray}
\label{decomposition}
    X_\lambda&=&W(D_m-\lambda I)^{-1}W=\sum_{i=1}^3\sum_{j=1}^3W_i(D_m-\lambda I)^{-1}W_j.
\end{eqnarray} Only the three operators of form $W_i(D_m-\lambda I)^{-1}W_i$ in \eqref{decomposition} contribute to the asymptotics of $n(s,X_\lambda)$. In some sense, the sum on the right-hand side of \eqref{decomposition} can be replaced by the operator
\begin{eqnarray*}
    X_\lambda^+=W_1(D_m-\lambda I)^{-1}W_1+W_2(D_m-\lambda I)^{-1}W_2+W_3(D_m-\lambda I)^{-1}W_3.    
\end{eqnarray*} We will show that the contribution of the remaining six operators in \eqref{decomposition} to the asymptotics of $n(s,X_\lambda)$ is negligible. Namely, for $i\ne j$, we will prove that 
\begin{eqnarray}
\label{asymptotic behavior of term containing Wi and Wj}    n(\varepsilon\alpha^{-1},W_i(D_m-\lambda I)^{-1}W_j)=o(\alpha^{2/p}), \qquad\text{as }\alpha\to\infty,\quad\forall\varepsilon>0.
\end{eqnarray} By Ky Fan Inequality, \eqref{asymptotic behavior of term containing Wi and Wj} would indicate that 
\begin{eqnarray}
\label{result of KF inequality}
    n(\varepsilon\alpha^{-1},X_\lambda^-)=o(\alpha^{2/p}),\qquad\text{as }\alpha\to\infty,
\end{eqnarray} where
\begin{eqnarray}
\label{expression for X-}
    X_\lambda^-=\sum_{i\ne j}W_i(D_m-\lambda I)^{-1}W_j.
\end{eqnarray}

\begin{proposition}
\label{integral}
    Let \eqref{result of KF inequality} hold for any $\varepsilon>0$. Assume that for every $\tau>0$, $n_+(\tau\alpha^{-1},X_\lambda^+)\sim\tau^{-2/p}\alpha^{2/p}J(\lambda,m)$ as $\alpha\to\infty$, where
    \begin{eqnarray*}
        J(\lambda,m)=\frac{1}{4\pi}\int_{\mathbb R^2}[((\lambda+\Psi(\theta)|x|^{-p})_+^2-m^2)_+]^{1/2}\,dx.    \end{eqnarray*} Then
        \begin{eqnarray*}
            n(\alpha^{-1},X_\lambda)\sim\alpha^{2/p}J(\lambda,m),\qquad\text{as } \alpha\to\infty.
        \end{eqnarray*}
\end{proposition}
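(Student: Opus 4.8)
The plan is to obtain the two-sided asymptotics of $n_+(\alpha^{-1},X_\lambda)$ — which is $N(\lambda,\alpha)$ by Proposition~\ref{B-S principle} — directly from the two hypotheses, using nothing beyond the decomposition $X_\lambda=X_\lambda^++X_\lambda^-$ and the subadditivity of the counting functions $n_\pm$ recorded in Section~2. The idea is that \eqref{result of KF inequality} makes $X_\lambda^-$ a negligible perturbation at \emph{every} threshold of order $\alpha^{-1}$, so that the leading term of $n_+(\alpha^{-1},X_\lambda)$ is forced to coincide with that of $n_+(\alpha^{-1},X_\lambda^+)$, for which we are given the exact law.

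For the upper bound I would fix $\varepsilon_0\in(0,1)$, split the threshold as $\alpha^{-1}=(1-\varepsilon_0)\alpha^{-1}+\varepsilon_0\alpha^{-1}$, and apply $n_+(s_1+s_2,T_1+T_2)\le n_+(s_1,T_1)+n_+(s_2,T_2)$ with $T_1=X_\lambda^+$ and $T_2=X_\lambda^-$. Bounding $n_+\le n$ in the error term and inserting \eqref{result of KF inequality} together with the hypothesis applied at $\tau=1-\varepsilon_0$ gives $\limsup_{\alpha\to\infty}\alpha^{-2/p}n_+(\alpha^{-1},X_\lambda)\le(1-\varepsilon_0)^{-2/p}J(\lambda,m)$; letting $\varepsilon_0\to0^+$ and using continuity of $\tau\mapsto\tau^{-2/p}$ yields $\limsup_{\alpha\to\infty}\alpha^{-2/p}n_+(\alpha^{-1},X_\lambda)\le J(\lambda,m)$.

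For the lower bound I would use the reversed decomposition $X_\lambda^+=X_\lambda+(-X_\lambda^-)$ with the split $(1+\varepsilon_0)\alpha^{-1}=\alpha^{-1}+\varepsilon_0\alpha^{-1}$: the same inequality gives $n_+((1+\varepsilon_0)\alpha^{-1},X_\lambda^+)\le n_+(\alpha^{-1},X_\lambda)+n_+(\varepsilon_0\alpha^{-1},-X_\lambda^-)$, and since $n_+(\varepsilon_0\alpha^{-1},-X_\lambda^-)=n_-(\varepsilon_0\alpha^{-1},X_\lambda^-)\le n(\varepsilon_0\alpha^{-1},X_\lambda^-)=o(\alpha^{2/p})$, the hypothesis applied at $\tau=1+\varepsilon_0$ produces $\liminf_{\alpha\to\infty}\alpha^{-2/p}n_+(\alpha^{-1},X_\lambda)\ge(1+\varepsilon_0)^{-2/p}J(\lambda,m)$. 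Sending $\varepsilon_0\to0^+$ and combining with the previous paragraph gives $n_+(\alpha^{-1},X_\lambda)\sim\alpha^{2/p}J(\lambda,m)$, which is the assertion.

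I do not expect a genuine obstacle at this stage: the argument is the standard perturbative interchange of counting functions, and all the analytic substance lies in the two hypotheses — the negligibility \eqref{result of KF inequality} of the off-diagonal blocks $W_i(D_m-\lambda I)^{-1}W_j$, $i\ne j$ (which is reduced to \eqref{asymptotic behavior of term containing Wi and Wj}), and the sharp law $n_+(\tau\alpha^{-1},X_\lambda^+)\sim\tau^{-2/p}\alpha^{2/p}J(\lambda,m)$ for the part built from the diagonal blocks $W_i(D_m-\lambda I)^{-1}W_i$, both established separately. The only points that require care are: that the conclusion must be read for $n_+$ (the part of the spectrum of $X_\lambda$ produced by the lower band of $D_m$ contributes comparably to $n_-$ but is not counted in $N(\lambda,\alpha)$); that in the lower bound one has to route the error through $-X_\lambda^-$ via $n_+(s,-T)=n_-(s,T)\le n(s,T)$; and that the hypothesis on $X_\lambda^+$ is invoked at the shifted arguments $\tau=1\mp\varepsilon_0$, which is legitimate precisely because it is assumed for every $\tau>0$, so that after the limit $\varepsilon_0\to0$ the factors $(1\mp\varepsilon_0)^{-2/p}$ collapse to $1$.
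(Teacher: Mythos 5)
Your argument is correct and is essentially the paper's own proof: the upper bound via $n_+(\alpha^{-1},X_\lambda)\le n_+((1-\varepsilon)\alpha^{-1},X_\lambda^+)+n(\varepsilon\alpha^{-1},X_\lambda^-)$, and the lower bound by running the subadditivity inequality on the reversed decomposition $X_\lambda^+=X_\lambda+(-X_\lambda^-)$, which in the paper appears with the threshold $((1-\varepsilon)\alpha)^{-1}$ on $X_\lambda^+$ — the same as your $\tau=1+\varepsilon_0$ up to reparametrization — followed by $\varepsilon\to0$ using continuity of $\tau\mapsto\tau^{-2/p}$. Your explicit handling of the error term through $n_+(s,-X_\lambda^-)=n_-(s,X_\lambda^-)\le n(s,X_\lambda^-)$ and your reading of the conclusion for $n_+$ (consistent with Proposition~\ref{B-S principle}) only make precise what the paper leaves implicit.
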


\begin{proof}
    Indeed, for any $\varepsilon>0$,
    \begin{eqnarray*}
        n(\alpha^{-1},X_\lambda)\le n_+((1-\varepsilon)\alpha^{-1},X_\lambda^+)+n(\varepsilon\alpha^{-1},X_\lambda^-),
    \end{eqnarray*} and
    \begin{eqnarray*}
        n(\alpha^{-1},X_\lambda)\ge n_+(((1-\varepsilon)\alpha)^{-1},X_\lambda^+)-n(\frac{\varepsilon}{1-\varepsilon}\alpha^{-1},X_\lambda^-).
    \end{eqnarray*} Therefore,
    \begin{eqnarray*}
    (1-\varepsilon)^{2/p}J(\lambda,m)\le\liminf_{\alpha\to\infty}\alpha^{-2/p}n_+(\alpha^{-1},X_\lambda)\le\limsup_{\alpha\to\infty}\alpha^{-2/p}n_+(\alpha^{-1},X_\lambda)\leq\frac{J(\lambda,m)}{(1-\varepsilon)^{2/p}}.
    \end{eqnarray*} Taking the limit as $\varepsilon\to0$, we obtain the objective relation.
\end{proof}

Let us first prove \eqref{result of KF inequality}. Evidently, operator $W_i(D_m-\lambda I)^{-1}W_j$ is self-adjoint if $i=j$ and non-self-adjoint if $i\ne j$. Also, $W_i(D_m-\lambda I)^{-1}W_j$ is the adjoint of $W_j(D_m-\lambda I)^{-1}W_i$. Hence, $X_\lambda^+$ and $X_\lambda^-$ are both self-adjoint operators. Since the singular values of the operators $W_i(D_m-\lambda I)^{-1}W_j$ and $W_i(D_m-\lambda I)^{-1}W_j$ are the same, we need to consider only three operators in \eqref{expression for X-}.

\subsection{Operator $W_1(D_m-\lambda I)^{-1}W_2$}

We intend to demonstrate that
\begin{eqnarray}
\label{W1W2}
    n(\varepsilon\alpha^{-1},W_1(D_m-\lambda I)^{-1}W_2)=o(\alpha^{2/p}),\qquad\text{as }\alpha\to\infty.
\end{eqnarray}
We fix $\delta\in(0,\infty)$ and select a function $\zeta\in C^\infty(\mathbb R)$ satisfying the condition
\begin{eqnarray}
\label{zeta}
    \zeta(t)=\begin{cases}
        0,\text{ if }t<0;\\
        1,\text{ if }t>\delta.
    \end{cases}
\end{eqnarray} Then we define another function $\theta\in C^\infty(\mathbb R^2)$ by setting $\theta(x)=\zeta(|x|-\varepsilon_1\alpha^{1/p})$. Observe that
\begin{eqnarray}
    W_1(D_m-\lambda I)^{-1}W_2&=W_1(D_m-\lambda I)^{-1}\theta W_2+ W_1(D_m-\lambda I)^{-1}(1-\theta) W_2\nonumber\\
    =&W_1[(D_m-\lambda I)^{-1},\theta]\,W_2+ W_1(D_m-\lambda I)^{-1}(1-\theta) W_2\label{another form of the first operator},
\end{eqnarray} where $[A,B]=AB-BA$. Also, it is easy to verify that $[A^{-1},B]=A^{-1}(BA-AB)A^{-1}$, provided that $A$ is invertible. Therefore,
\begin{eqnarray}
\label{results for the algebraic relation}
    [(D_m-\lambda I)^{-1},\theta]&=&(D_m-\lambda I)^{-1}(\theta(D_m-\lambda I)-(D_m-\lambda I)\theta)(D_m-\lambda I)^{-1}\nonumber\\
    &=&(D_m-\lambda I)^{-1}(\theta D_m-D_m\theta)(D_m-\lambda I)^{-1}\label{commutator result}.
\end{eqnarray} A brief computation yields that
\begin{eqnarray}
\label{commutator}
\nonumber
    \theta D_m-D_m\theta=\theta D_0-D_0\theta=Y_0+Y_1,
\end{eqnarray} where
\begin{eqnarray*}
    Y_0&=&\begin{pmatrix}
        0&\theta_1\\\theta_2&0
    \end{pmatrix},
    Y_1=2\begin{pmatrix}
        0&\theta_3(\frac{\partial}{\partial x_1}-i\frac{\partial}{\partial x_2})\\\theta_4(\frac{\partial}{\partial x_1}+i\frac{\partial}{\partial x_2})&0
    \end{pmatrix},\\
    \theta_1&=&\frac{\partial^2\theta}{\partial x_1^2}-2i\frac{\partial\theta}{\partial x_1\partial x_2}-\frac{\partial^2\theta}{\partial x_2^2},
    \theta_2=\frac{\partial^2\theta}{\partial x_1^2}+2i\frac{\partial\theta}{\partial x_1\partial x_2}-\frac{\partial^2\theta}{\partial x_2^2},\\
    \theta_3&=&\frac{\partial\theta}{\partial x_1}-i\frac{\partial\theta}{\partial x_2},\quad
    \theta_4=\frac{\partial\theta}{\partial x_1}+i\frac{\partial\theta}{\partial x_2}.    
\end{eqnarray*} Multiplying  the equality \eqref{results for the algebraic relation} by $W_1$ and $W_2$,  we obtain
\begin{eqnarray}
\label{Y0 and Y1}
    W_1((D_m-\lambda I)^{-1}\theta-\theta(D_m-\lambda I)^{-1})W_2&=&W_1(D_m-\lambda I)^{-1}(Y_0+Y_1)(D_m-\lambda I)^{-1}W_2\nonumber\\
    &=&X_{1,2}+\tilde X_{1,2},
\end{eqnarray} where
\begin{eqnarray*}
    X_{1,2}&=&W_1(D_m-\lambda I)^{-1}Y_0(D_m-\lambda I)^{-1}W_2,\\
    \tilde X_{1,2}&=&W_1(D_m-\lambda I)^{-1}Y_1(D_m-\lambda I)^{-1}W_2.
\end{eqnarray*} Let us also denote $\hat{X}_{1,2}=W_1(D_m-\lambda I)^{-1}(1-\theta)W_2$. To prove \eqref{asymptotic behavior of term containing Wi and Wj} for $i=1,j=2$, it is sufficient to establish that $n(\varepsilon\alpha^{-1},X_{1,2})=o(\alpha^{2/p}),n(\varepsilon\alpha^{-1},\tilde X_{1,2})=o(\alpha^{2/p})$ and $n(\varepsilon\alpha^{-1},\hat X_{1,2})=o(\alpha^{2/p})$ as $\alpha\to\infty$.

Let us examine the operator $X_{1,2}$. Let $\chi_\delta$ denote the characteristic function of the layer $\{x\in\mathbb R^2: \varepsilon_1\alpha^{1/p}<|x|<\varepsilon_1\alpha^{1/p}+\delta\}$. Since the supports of the functions $\theta_1$ and $\theta_2$ are both contained in the layer, one may represent operator $X_{1,2}$ as
\begin{eqnarray*}
\label{reformulation of X12}
    X_{1,2}=W_1(D_0-\lambda I)^{-1}\chi_\delta Y_0\chi_\delta(D_0-\lambda I)^{-1}W_2.
\end{eqnarray*} In addition, by the definition of $Y_0$, we have $\|Y_0\|\le\|\theta_1\|_\infty+\|\theta_2\|_\infty$. Based on H\"older's inequality for the $\Sigma_p$ class stated in Proposition ~\ref{Holder's inequality}, we infer that
\begin{eqnarray*}
    \|X_{1,2}\|_{\Sigma_{1/2}}\le4(\|\theta_1\|_\infty+\|\theta_2\|_\infty)\|W_1(D_m-\lambda I)^{-1}\chi_\delta\|_{\Sigma_1}\cdot\|\chi_\delta(D_m-\lambda I)^{-1}W_2\|_{\Sigma_1}.
\end{eqnarray*} Moreover,
\begin{eqnarray}
\label{very important result}
    F(D_m-\lambda I)^{-1}F^*=\frac{1}{m^2-\lambda^2+|\xi|^4}\begin{pmatrix}
        \lambda+m&(\xi_1-i\xi_2)^2\\(\xi_1+i\xi_2)^2&\lambda-m
    \end{pmatrix}
\end{eqnarray} which tends to zero as $O(|\xi|^{-2})$ as $|\xi|\to\infty$. Therefore, there exists some function $b\in L^\infty(\mathbb R^2)$ which fulfills the relation
\begin{eqnarray*}
    F(D_m-\lambda I)^{-1}F^*= \frac{b(\xi)}{1+|\xi|^2}.
\end{eqnarray*} Consequently, we obtain
\begin{eqnarray}
    &\|W_1(D_m-\lambda I)^{-1}\chi_\delta\|_{\Sigma_1}\le\|W_1F^*(|\xi|^2+1)^{-1/2}b(\xi)(|\xi|^2+1)^{-1/2}F\chi_\delta\|_{\Sigma_1}\nonumber\\
    &\le\|b\|_\infty\|W_1F^*(|\xi|^2+1)^{-1/2}\|_{\Sigma_2}\|(|\xi|^2+1)^{-1/2}F\chi_\delta\|_{\Sigma_2}\label{initial estimate}.
\end{eqnarray} Through Corollary ~\ref{corollary of three cases}, we conclude that
\begin{align}
    &\|W_1F^*(|\xi|^2+1)^{-1/2}\|^2_{\Sigma_2}\nonumber
    =\|W_1F^*(|\xi|^2+1)^{-1}FW_1\|_{\Sigma_1}\nonumber\\
    &=\|W_1((-\Delta)+I)^{-1}W_1\|_{\Sigma_1}\label{one estimate}\le C_0[W_1^2]_1.
\end{align} Likewise, we have
\begin{eqnarray}
\label{another estimate}
    \|(|\xi|^2+1)^{-1/2}F\chi_\delta\|^2_{\Sigma_2}\le C_0[\chi_\delta]_1.
\end{eqnarray} 

Combining \eqref{initial estimate}, \eqref{one estimate} and \eqref{another estimate}, we come to  the following estimate:
\begin{eqnarray*}
    \|W_1(D_m-\lambda I)^{-1}\chi_\delta\|_{\Sigma_1}&\le& C_0\|b\|_\infty[\chi_\delta]^{1/2}_1\cdot[W^2_1]^{1/2}_1\\
    &=&C_1\left(\sum_{n\in\mathbb Z^d}\|W_1^2\|_{L^q(\mathbb Q_n)}\right)^{1/2}[\chi_\delta]_1^{1/2}\\
    &\le&C_1\left(\sum_{n\in\mathbb Z^d,|n|<\varepsilon_1\alpha^{1/p}}\frac{1}{|n|^p+1}\right)^{1/2}(O(\alpha^{1/p}))^{1/2}\\
    &\le&C_2\left(\int_{|x|<\varepsilon_1\alpha^{1/p}}\frac{dx}{|x|^p+1}\right)^{1/2}O(\alpha^{1/2p})\\
    &\asymp& C_2\left(\int_0^{2\pi}\int_{r<\varepsilon_1\alpha^{1/p}}r^{1-p}\,dr\,d\theta\right)^{1/2}O(\alpha^{1/2p})\\
    &=&\left(2\pi  C_2\left[\frac{r^{2-p}}{2-p}\right]\Big|_{r=0}^{r=\varepsilon_1\alpha^{1/p}}\right)^{1/2}O(\alpha^{1/2p})\\
    &=&(C_3\varepsilon_1^{2-p}\alpha^{\frac{2}{p}-1})^{1/2}O(\alpha^{1/2p})=C_4\varepsilon_1^{1-\frac{p}{2}}O(\alpha^{\frac{3}{2p}-\frac{1}{2}}),
\end{eqnarray*} where $C_1,C_2,C_3$ and $C_4$ are non-negative constants. In the estimate above, we used the fact that
\begin{eqnarray*}
    [\chi_\delta]_1 &=& \sum_{n \in \mathbb{Z}^d} \|\chi_\delta\|_{L^q(\mathbb{Q}_n)}\asymp \sum_{\varepsilon_1 \alpha^{1/p} < |n| < \varepsilon_1 \alpha^{1/p} + \delta} 1 \\
    &=& \text{Area} \{ x \in \mathbb{R}^2 : \varepsilon_1 \alpha^{1/p} < |x| < \varepsilon_1 \alpha^{1/p} + \delta \} \\
    &=& \pi \left[ (\varepsilon_1 \alpha^{1/p} + \delta)^2 - (\varepsilon_1 \alpha^{1/p})^2 \right]=\pi \left( 2 \varepsilon_1 \alpha^{1/p} \delta + \delta^2 \right).
\end{eqnarray*} Therefore,
\begin{eqnarray*}
    \|X_{1,2}\|_{\Sigma_{1/2}}&\le&4(\|\theta_1\|_\infty+\|\theta_2\|_\infty)\|W_1(D_m-\lambda I)^{-1}\chi_\delta\|_{\Sigma_1}\cdot\|\chi_\delta(D_m-\lambda I)^{-1}W_2\|_{\Sigma_1}\\
    &=&O(\alpha^{\frac{3}{p}-1}),\qquad\text{as } \alpha\to\infty.
\end{eqnarray*} Due to
\begin{eqnarray*}
    \sqrt{s}n(s,X_{1,2})\le\|X_{1,2}\|_{\Sigma_{1/2}}^{1/2},
\end{eqnarray*} we draw the conclusion that
\begin{eqnarray*}
    n(\varepsilon\alpha^{-1},X_{1,2})\le O(\alpha^{\frac{3}{2p}})=o(\alpha^{2/p}),
\end{eqnarray*} as $\alpha\to\infty$. The study of operator $X_{1,2}$ is complete.\\

Let us deal with  the operator $\tilde X_{1,2}$. Note that the order of the differential operator $Y_1$ is 1 and that of the Laplace operator $\Delta$ is 2, which implies that the operator $Y_1(-\Delta+I)^{-1/2}$ is bounded. Indeed, through direct computation, we derive the following inequality:
\begin{eqnarray}
\label{upper estimate of Y1u}
\|Y_1u\|^2&=&\int_{\mathbb R^2}\Big|\theta_3\left(\frac{\partial u_2}{\partial x_1}-i\frac{\partial u_2}{\partial x_2}\right)\Big|^2\,dx+\int_{\mathbb R^2}\Big|\theta_4\left(\frac{\partial u_1}{\partial x_1}+i\frac{\partial u_1}{\partial x_2}\right)\Big|^2\,dx\nonumber\\&\le&8\left(\|\theta_3\|^2_\infty\int_{\mathbb R^2}|\nabla u_2|^2\,dx+\|\theta_4\|^2_\infty\int_{\mathbb R^2}|\nabla u_1|^2\,dx\right).
\end{eqnarray} On the other hand, we have
\begin{eqnarray}
\label{estimate concerning gredient}
    \|(-\Delta+I)^{1/2}u\|^2=\int_{\mathbb R^2}(|\nabla u|^2+|u|^2)\,dx,
\end{eqnarray} where
\begin{eqnarray*}
    \nabla u=\left(
        \frac{\partial u}{\partial x_1},
        \frac{\partial u}{\partial x_2}
    \right)^T.
\end{eqnarray*} Incorporating \eqref{upper estimate of Y1u} and \eqref{estimate concerning gredient}, we obtain
\begin{eqnarray}
    \|Y_1u\|^2&\le&8(\|\theta_3\|_\infty^2+\|\theta_4\|_\infty^2)\cdot\left(\int_{\mathbb R^2}\left(|\nabla u_1|^2+|\nabla u_2|^2\right)\,dx\right)\nonumber\\
    &\le&C_\delta\left(\int_{\mathbb R^2}\left(|\nabla u_1|^2+|u_1|^2+|\nabla u_2|^2+|u_2|^2\right)\,dx\right)\nonumber\\
    &=&C_\delta\|(-\Delta+I)^{1/2}u\|^2_{L^2},\qquad\forall u\in L^2,\label{estimate of Y1u}
\end{eqnarray} where $C_\delta=8(\|\theta_3\|_\infty^2+\|\theta_4\|_\infty^2)$ is a constant depending only on $\delta$. Making the substitution $u=(-\Delta+I)^{-1/2}v$ in \eqref{estimate of Y1u}, we find that
\begin{eqnarray*}
    \|Y_1(-\Delta+I)^{-1/2}v\|\le C_\delta\|v\|,\qquad\forall v\in L^2,
\end{eqnarray*} which means that $Y_1(-\Delta+I)^{-1/2}$ is a bounded operator.

One can also see that the operator $\tilde X_{1,2}$ may be written as
\begin{eqnarray*}
    \tilde X_{1,2}=W_1(D_m-\lambda I)^{-1}\chi_\delta[Y_1(-\Delta+I)^{-1/2}](-\Delta+I)^{1/2}(D_m-\lambda I)^{-1}W_2.
\end{eqnarray*} It follows from H\"older's inequality and Corollary ~\ref{corollary of three cases} that
\begin{eqnarray*}
    \|\tilde X_{1,2}\|_{2/3}&\le&C\|W_1(D_m-\lambda I)^{-1}\chi_\delta\|_{\Sigma_1}\cdot\|(-\Delta+I)^{1/2}(D_m-\lambda I)^{-1}W_2\|_{\Sigma_2}\\
    &\le&CC_0^{1/2}[W_1^2]^{1/2}_1[\chi_\delta]_1^{1/2}\|(-\Delta+I)^{1/2}(D_m-\lambda I)^{-1}W_2\|_{\Sigma_2}\\
    &=&O(\alpha^{\frac{3}{2p}-\frac{1}{2}})\|(-\Delta+I)^{1/2}(D_m-\lambda I)^{-1}W_2\|_{\Sigma_2},
\end{eqnarray*} for some constant $C\ge0$. To approximate the last factor on the right-hand side, we use the representation
\begin{eqnarray*}
    (-\Delta+I)^{1/2}(D_m-\lambda I)^{-1}W_2&=&F^*\frac{(|\xi|^2+1)^{1/2}}{m^2-\lambda^2+|\xi|^4}\begin{pmatrix}
        \lambda+m&-(\xi_1-i\xi_2)^2\\-(\xi_1+i\xi_2)^2&\lambda-m
    \end{pmatrix}^2FW_2\\
    &=&F^*\frac{1}{(|\xi|^2+1)^{1/2}}\tilde b(\xi)FW_2,
\end{eqnarray*} for some $\tilde b\in L^\infty(\mathbb R^2)$. Hence,
\begin{align*}
&\|(-\Delta+I)^{1/2}(D_m-\lambda I)^{-1}W_2\|_{\Sigma_2}
=\|F^*\frac{1}{(|\xi|^2+1)^{1/2}}\tilde b(\xi)FW_2\|_{\Sigma_2}
\\&\le\|\tilde b\|_\infty\cdot\|(|\xi|^2+1)^{-{1/2}}FW_2\|_{\Sigma_2}=\|\tilde b\|_\infty\cdot\|W_2F^*\frac{1}{\sqrt{|\xi|^2+1}}\frac{1}{\sqrt{|\xi|^2+1}}FW_2\|_{\Sigma_1}^{1/2}\\
&=\|\tilde b\|_\infty\|W_2(-\Delta+I)^{-1}W_2\|_{\Sigma_1}^{1/2}\le C_5[W_2^2]_1^{1/2}\\
&=C_5\sum_{n\in\mathbb Z^d}\|W_2^2\|^{1/2}_{L^q(\mathbb Q_n)}
=O(\alpha^{\frac{1}{p}-\frac{1}{2}}),\quad\text{as }\alpha\to\infty,
\end{align*} where $C_5$ is a non-negative constant. Therefore, we get
\begin{eqnarray*}
    \|\tilde X_{1,2}\|_{\Sigma_{2/3}}=O(\alpha^{\frac{5}{2p}-1}),\quad\text{as }\alpha\to\infty,
\end{eqnarray*} which leads to the inequality
\begin{eqnarray*}
    (\varepsilon\alpha^{-1})^{2/3}n(\varepsilon\alpha^{-1},\tilde X_{1,2})\le\|\tilde X_{1,2}\|^{2/3}_{\Sigma_{2/3}}=O(\alpha^{\frac{5}{3p}-\frac{2}{3}}).
\end{eqnarray*} As a result, we establish that $n(\varepsilon\alpha^{-1},\tilde X_{1,2})=O(\alpha^{\frac{5}{3p}})=o(\alpha^{2/p})$. The analysis of operator $\tilde X_{1,2}$ is accomplished.

We still have to demonstrate that $n(\varepsilon\alpha^{-1},\hat X_{1,2})=o(\alpha^{2/p})$  as $\alpha\to\infty$. Note that
\begin{eqnarray*}
    \hat X_{1,2}=W_1(D_m-\lambda I)^{-1}(1-\theta)W_2
    =W_1F^*\frac{1}{(\xi^2+1)^{1/2}}b(\xi)\frac{1}{(\xi^2+1)^{1/2}}F(1-\theta)W_2.
\end{eqnarray*} As a consequence,
\begin{eqnarray*}
    \|\hat X_{1,2}\|_{\Sigma_1}&\le&\|b\|_\infty\|W_1F^*\frac{1}{(\xi^2+1)^{1/2}}\|_{\Sigma_2}\cdot\|\frac{1}{(\xi^2+1)^{1/2}}F(1-\theta)W_2\|_{\Sigma_2}\\
    &\le&C_0\|b\|_\infty[W_1^2]_1^{1/2}[(1-\theta)^2W_2^2]_1^{1/2}.
\end{eqnarray*} Recall that we have already illustrated the relation
\begin{eqnarray*}
    [W_1^2]_1=O(\alpha^{\frac{2}{p}-1}),\quad\text{as }\alpha\to\infty.
\end{eqnarray*} Moreover,
\begin{eqnarray*}
    [(1-\theta)^2W_2^2]_1&=&\sum_{n\in\mathbb Z^d}\|(1-\theta)^2W_2^2\|_{L^q(\mathbb Q_n)}\asymp C\int_{\varepsilon_1\alpha^{1/p}<r<\varepsilon_1\alpha^{1/p}+\delta}\frac{r\,dr}{r^p}\\
    &=&C\left[\frac{r^{2-p}}{2-p}\right]\Big|^{r=\varepsilon_1\alpha^{1/p}+\delta}_{r=\varepsilon_1\alpha^{1/p}}    =C((\varepsilon_1\alpha^{1/p}+\delta)^{2-p}-(\varepsilon_1\alpha^{1/p})^{2-p})\\
    &=&(\varepsilon_1\alpha^{1/p})^{2-p}((1+\frac{\delta}{\varepsilon_1\alpha^{1/p}})^{2/p}-1)\sim(\varepsilon_1\alpha^{1/p})^{2-p}\left(\frac{\delta(2-p)}{\varepsilon_1\alpha^{1/p}}\right)\\&=&O(\alpha^{\frac{1}{p}-1}).
\end{eqnarray*} Thus,
\begin{eqnarray*}
    \|\hat X_{1,2}\|_{\Sigma_1}=O(\alpha^{\frac{3}{2p}-1}),
\end{eqnarray*} which implies
\begin{eqnarray*}
    n(\varepsilon\alpha^{-1},\hat X_{1,2})=O(\alpha^\frac{3}{2p})=o(\alpha^{\frac{2}{p}}),\quad\text{as }\alpha\to\infty.
\end{eqnarray*}
 
\subsection{Operator $W_2(D_0-\lambda I)^{-1}W_3$}

We employ the same approach as in the preceding subsection to examine the second non-self-adjoint operator in \eqref{decomposition}, specifically, $W_2(D_0-\lambda I)^{-1}W_3$.

Let function $\zeta\in C^\infty(\mathbb R)$ be defined as in \eqref{zeta} and function $\eta:\mathbb R^2\to\mathbb R$ be given by $\eta(x)=\zeta(|x|-\varepsilon_2\alpha^{1/p})$. There is no doubt that \eqref{another form of the first operator}, \eqref{commutator result} and \eqref{commutator} would all remain applicable if we replace $\theta$, $W_1$ and $W_2$ by $\eta,W_2$ and $W_3$, respectively. In particular,
\begin{eqnarray}
    W_2(D_m-\lambda I)^{-1}W_3&=W_2(D_m-\lambda I)^{-1}\eta W_3+ W_2(D_m-\lambda I)^{-1}(1-\eta) W_3\nonumber\\
    =&W_2[(D_m-\lambda I)^{-1},\eta]\,W_3+ W_2(D_m-\lambda I)^{-1}(1-\eta) W_3\label{commutator for W2W3},
\end{eqnarray} and the first term on the right-hand side can be written as
\begin{eqnarray}
\label{Y0 and Y1 for the second operator}
    W_2[(D_m-\lambda I)^{-1},\eta]\,W_3&=&W_2(D_m-\lambda I)^{-1}(\tilde Y_0+\tilde Y_1)(D_m-\lambda I)^{-1}W_3 \nonumber\\
    &=&X_{2,3}+\tilde X_{2,3},
\end{eqnarray} where
\begin{eqnarray*}
    X_{2,3}&=&W_2(D_m-\lambda I)^{-1}\tilde Y_0(D_m-\lambda I)^{-1}W_3,\\
    \tilde X_{2,3}&=&W_2(D_m-\lambda I)^{-1}\tilde Y_1(D_m-\lambda I)^{-1}W_3\\
    \tilde Y_0&=&\begin{pmatrix}
        0&\eta_1\\\eta_2&0
    \end{pmatrix},
    \tilde Y_1=2\begin{pmatrix}
        0&\eta_3(\frac{\partial}{\partial x_1}-i\frac{\partial}{\partial x_2})\\\eta_4        (\frac{\partial}{\partial x_1}+i\frac{\partial}{\partial x_2})&0    \end{pmatrix},
\end{eqnarray*} with $\eta_1,\eta_2,\eta_3$ and $\eta_4$ defined in the same way as $\theta_1,\theta_2,\theta_3$ and $\theta_4$, respectively, except that, each time, 
$\theta$ is replaced by $\eta$. We argue in the same way as before to show that $n(s,X_{2,3})=o(\alpha^{2/p})$ and $n(s,\tilde X_{2,3})=o(\alpha^{2/p})$.

Let  us  first deal   with the  operator $X_{2,3}$.  Denote  by
$\psi_\delta$ the characteristic function of set $\{x\in\mathbb R^2:\varepsilon_2\alpha^{1/p}<|x|<\varepsilon_2\alpha^{1/p}+ \delta\}$. Then
\begin{eqnarray*}
X_{2,3}=W_2(D_m-\lambda I)^{-1}\psi_\delta  \tilde Y_0\psi_\delta(D_m-\lambda I)^{-1}W_3.
\end{eqnarray*} Referring to  the bound $\|\tilde Y_0\|\le\|\eta_1\|_\infty+\|\eta_2\|_\infty$ and  using H\"older's inequality in the $\Sigma_p$-class, we obtain
\begin{eqnarray}
\label{holders inequality}
    \|X_{2,3}\|_{\Sigma_1}\le(\|\eta_1\|_\infty+\|\eta_2\|_\infty)\|W_2(D_m-\lambda I)^{-1}\psi_\delta\|_{\Sigma_1}\cdot\|\psi_\delta(D_m-\lambda I)^{-1}W_3\|.
\end{eqnarray} If  $b\in L^\infty(\mathbb R^2)$ is the function  specified in the preceding  subsection,  then
\begin{align}
    \|W_2(D_m-\lambda I)^{-1}\psi_\delta\|_{\Sigma_1}&\le\|W_2F^*(|\xi|^2+1)^{-1/2}b(\xi)(|\xi|^2+1)^{-1/2}F\psi_\delta\|_{\Sigma_1}\nonumber\\
    \le&\|b\|_\infty\|W_2F^*(|\xi|^2+1)^{-\frac{1}{2}}\|_{\Sigma_2}\|(|\xi|^2+1)^{-\frac{1}{2}}F\psi_\delta\|_{\Sigma_2}\label{initial estimate 2}.
\end{align} Furthermore, \eqref{one estimate} and \eqref{another estimate} hold  when $W_1$ and $\chi_\delta$ are substituted by $W_2$ and $\psi_\delta$, respectively. Combining  them yields the following estimate:
\begin{eqnarray*}
    \|W_2(D_m-\lambda I)^{-1}\psi_\delta\|_{\Sigma_2}&\le& C_8\|b\|_\infty[\psi_\delta]^{1/2}_1\cdot[W_2^2]^{1/2}_1.
\end{eqnarray*} On the other hand, 
\begin{eqnarray*}
    \sum_{n\in\mathbb Z^d}\|W_2^2\|_{L^q(\mathbb Q_n)}&\le& C_9\sum_{n\in\mathbb Z^d,\varepsilon_1\alpha^{1/p}<|n|<\varepsilon_2\alpha^{1/p}}\frac{1}{|n|^p+1}\\
    &\le&C_{10}\int_{\varepsilon_1\alpha^{1/p}<|x|<\varepsilon_2\alpha^{1/p}}\frac{dx}{|x|^p+1}\asymp C_{10}\int_0^{2\pi}\int^{\varepsilon_1\alpha^{1/p}}_{\varepsilon_2\alpha^{1/p}}r^{1-p}\,dr\,d\theta\\
    &=&2\pi C_{10}\left[\frac{r^{2-p}}{2-p}\right]\Big|_{r=\varepsilon_1\alpha^{1/p}}^{r=\varepsilon_2\alpha^{1/p}}=C_{11}(\varepsilon_2^{2-p}-\varepsilon_1^{2-p})\alpha^{\frac{2}{p}-1},
\end{eqnarray*} where $C_9,C_{10}$ and $C_{11}$ are all non-negative constants. Also, we have
\begin{eqnarray*}
    [\psi_\delta]_1&=&\sum_{n\in\mathbb Z^d}\|\psi_\delta\|_{L^q(\mathbb Q_n)}\le\sum_{\varepsilon_2\alpha^{1/p}<|n|<\varepsilon_2\alpha^{1/p}+\delta}1\\
    &=&\text{Area}\{x\in\mathbb R^2:\varepsilon_2\alpha^{1/p}<|x|<\varepsilon_2\alpha^{1/p}+\delta\}=\pi[(\varepsilon_2\alpha^{1/p}+\delta)^2-(\varepsilon_2\alpha^{1/p})^2]\\
    &=&\pi(2\varepsilon_2\alpha^{1/p}\delta+\delta^2).
\end{eqnarray*} As a consequence,
\begin{eqnarray*}
    \|W_2(D_m-\lambda I)^{-1}\psi_\delta\|_{\Sigma_1}\le O(\alpha^{\frac{1}{p}-\frac{1}{2}})O(\alpha^{\frac{1}{2p}})=O(\alpha^{\frac{3}{2p}-\frac{1}{2}}),\qquad\text{as } \alpha\to\infty.
\end{eqnarray*} Therefore,  since according to \eqref{holders inequality},  there is  a constant $C_{12}\ge0$  for which
\begin{eqnarray*}
    \|X_{2,3}\|_{\Sigma_1}\le C_{12}\|W_2(D_m-\lambda I)^{-1}\psi_\delta\|_{\Sigma_1}\cdot\|\psi_\delta(D_m-\lambda I)^{-1}W_3\|,
\end{eqnarray*}  it remains to estimate the last factor on the right hand side.  Recall that for every $\lambda\in(-m,m)$,
\begin{eqnarray*}
    \|(D_m-\lambda I)^{-1}\|\le\frac{1}{\text{dist}(\lambda,\sigma(D_m))},
\end{eqnarray*} where $\sigma(D_m)$ denotes the spectrum of the operator $D_m$. Also, there is a constant $C$ that depends only on $\lambda$ and satisfies that $W_3(x)\le C|x|^{-p/2}$, which implies that
\begin{eqnarray*}
    \|\psi_\delta(D_m-\lambda I)^{-1}W_3\|\le \frac{C}{\text{dist}(\lambda,\sigma(D_m))}\varepsilon_2^{-p/2}\alpha^{-1/2}.
\end{eqnarray*} As a consequence, we obtain the following formula:
\begin{eqnarray*}
    \varepsilon\alpha^{-1}n(\varepsilon\alpha^{-1},X_{2,3})\le\|X_{2,3}\|_{\Sigma_{1}}=O(\alpha^{\frac{3}{2p}-1}),
\end{eqnarray*} which leads to
\begin{eqnarray*}
    n(\varepsilon\alpha^{-1},X_{2,3})= O(\alpha^{\frac{3}{2p}})=o(\alpha^{2/p}),\quad\text{as }\alpha\to\infty.
\end{eqnarray*} That is exactly the desired relation.\\

We now study operator $\tilde X_{2,3}$. Observe that an alternative expression for $\tilde X_{2,3}$ is
\begin{eqnarray*}
    \tilde X_{2,3}=W_2(D_m-\lambda I)^{-1}\psi_\delta[\tilde Y_1(-\Delta+I)^{-1/2}](-\Delta+I)^{1/2}(D_m-\lambda I)^{-1}W_3.
\end{eqnarray*} Using boundedness of the operator $\tilde Y_1(-\Delta+I)^{-1/2}$, H\"older's inequality and Corollary ~\ref{corollary of three cases}, we estimate the $\Sigma_1$-norm of $\tilde X_{2,3}$ as follows:
\begin{eqnarray*}
    \|\tilde X_{2,3}\|_{\Sigma_1}&\le&C_{13}\|W_2(D_m-\lambda I)^{-1}\psi_\delta\|_{\Sigma_1}\cdot\|(-\Delta+I)^{1/2}(D_m-\lambda I)^{-1}W_3\|\\
    &\le&C_{13}C_0[W_2^2]^{1/2}_1[\psi_\delta]_1^{1/2}\|(-\Delta+I)^{1/2}(D_m-\lambda I)^{-1}W_3\|\\
    &=&O(\alpha^{\frac{3}{2p}-\frac{1}{2}})\|(-\Delta+I)^{1/2}(D_m-\lambda I)^{-1}W_3\|,
\end{eqnarray*} with some constant $C_{13}\ge0$. Also, note that
\begin{eqnarray*}
    (-\Delta+I)^{1/2}(D_m-\lambda I)^{-1}W_3&=&F^*\frac{(|\xi|^2+1)^{1/2}}{m^2-\lambda^2+|\xi|^4}\begin{pmatrix}
        \lambda+m&-(\xi_1-i\xi_2)^2\\-(\xi_1+i\xi_2)^2&\lambda-m
    \end{pmatrix}^2FW_3\\
    &=&F^*\frac{1}{(|\xi|^2+1)^{1/2}}\tilde b(\xi)FW_3,
\end{eqnarray*} where $\tilde b\in L^\infty(\mathbb R^2)$. By the fact that there is some constant $C\ge 0$ such that $|W_3(x)|$ is bounded from above by $C|x|^{-p/2}$, we get
\begin{eqnarray*}
    \|F^*\frac{1}{(|\xi|^2+1)^{1/2}}\tilde b(\xi)FW_3\|\le\|\tilde b\|_\infty\cdot\|W_3\|_\infty\le C_{14}\|\tilde b\|_\infty \varepsilon_2^{-p/2}\alpha^{-1/2}.
\end{eqnarray*} Therefore,
\begin{eqnarray*}
    \|\tilde X_{2,3}\|_{\Sigma_1}\le O(\alpha^{\frac{3}{2p}-1}),\quad\text{as }\alpha\to\infty,
\end{eqnarray*} which gives
\begin{eqnarray*}
    \varepsilon\alpha^{-1}n(\varepsilon\alpha^{-1},\tilde X_{2,3})\le\|\tilde X_{2,3}\|_{\Sigma_1}=O(\alpha^{\frac{3}{2p}-1}).
\end{eqnarray*} This is enough to conclude that $n(s,\tilde X_{2,3})=O(\alpha^{3/2p})=o(\alpha^{2/p})$. By this  we complete   the work with the operator $\tilde X_{2,3}$.\\

However,  according to \eqref{commutator for W2W3}, we still have to show that $n(\varepsilon\alpha^{-1},\hat X_{2,3})=o(\alpha^{2/p})$,  as $\alpha\to\infty$, for the operator
$$
 \hat X_{2,3}=W_2(D_m-\lambda I)^{-1}(1-\eta)W_3.
$$
 For this purpose, we observe that
\begin{eqnarray*}
    \hat X_{2,3}&=&W_2(D_m-\lambda I)^{-1}(1-\eta)W_3\\
    &=&W_2F^*\frac{1}{(\xi^2+1)^{1/2}}b(\xi)\frac{1}{(\xi^2+1)^{1/2}}F(1-\eta)W_3.
\end{eqnarray*} As a consequence, we obtain
\begin{eqnarray*}
    \|\hat X_{2,3}\|_{\Sigma_1}&\le&\|b\|_\infty\|W_2F^*\frac{1}{(\xi^2+1)^{1/2}}\|_{\Sigma_2}\cdot\|\frac{1}{(\xi^2+1)^{1/2}}F(1-\eta)W_3\|_{\Sigma_2}\\
    &\le&C_0\|b\|_\infty[W_2^2]_1^{1/2}[(1-\eta)^2W_3^2]_1^{1/2}.
\end{eqnarray*} Now we recall that 
\begin{eqnarray*}
    [W_2^2]_1=O(\alpha^{\frac{2}{p}-1}),\quad\text{as }\alpha\to\infty.
\end{eqnarray*} Furthermore,
\begin{align*}
    &[(1-\eta)^2W_3^2]_1=\sum_{n\in\mathbb Z^d}\|(1-\eta)^2W_3^2\|_{L^q(\mathbb Q_n)}\asymp C\int_{\varepsilon_2\alpha^{1/p}<r<\varepsilon_2\alpha^{1/p}+\delta}\frac{r\,dr}{r^p}\\
    &=C\left[\frac{r^{2-p}}{2-p}\right]\Big|^{r=\varepsilon_2\alpha^{1/p}+\delta}_{r=\varepsilon_2\alpha^{1/p}}=C((\varepsilon_2\alpha^{1/p}+\delta)^{2-p}-(\varepsilon_2\alpha^{1/p})^{2-p})=O(\alpha^{\frac{1}{p}-1}).
\end{align*} Therefore,
\begin{eqnarray*}
    \|\hat X_{2,3}\|_{\Sigma_1}=O(\alpha^{\frac{3}{2p}-1}),
\end{eqnarray*} which implies that
\begin{eqnarray*}
    n(\varepsilon\alpha^{-1},\hat X_{2,3})=O(\alpha^\frac{3}{2p})=o(\alpha^{\frac{2}{p}}),\quad\text{as }\alpha\to\infty.
\end{eqnarray*}

\subsection{Operator $W_1(D_m-\lambda I)^{-1}W_3$}

Applying a similar method, we demonstrate that $n(\varepsilon\alpha^{-1},W_1(D_m-\lambda I)^{-1}W_3)=o(\alpha^{2/p})$ as $\alpha\to\infty$ for every $\varepsilon>0$.\\

The decomposition of the space $L^2({\Bbb R}^2)$ into the orthogonal sum $L^2(\Omega_1(\alpha))\oplus L^2(\Omega_2(\alpha))\oplus L^2(\Omega_3(\alpha))$
leads to the  decomposition of the  operator $X_\lambda$ formally displayed   by the matrix
\begin{equation} \notag
X_\lambda= \left( \begin{array}{ccc} 
W_1(D_m-\lambda I)^{-1}W_1 & W_1(D_m-\lambda I)^{-1}W_2& W_1(D_m-\lambda I)^{-1}W_3  \\ [0.5cm]
W_2(D_m-\lambda I)^{-1}W_1&W_2(D_m-\lambda I)^{-1}W_2& W_2(D_m-\lambda I)^{-1}W_3 \\ [0.5cm]
W_3(D_m-\lambda I)^{-1}W_1  & W_3(D_m-\lambda I)^{-1}W_2& W_3(D_m-\lambda I)^{-1}W_3 
\end{array}\right).
\end{equation} 
We  have already proved that the off-diagonal   elements of this matrix  do not contribute  to the asymptotics of $N(\lambda,\alpha)$.
It remains to compute the contribution of    the diagonal elements,   whose orthogonal sum is denoted by $X_\lambda^+$:
\begin{equation} \notag
X^+_\lambda= \left( \begin{array}{ccc} 
W_1(D_m-\lambda I)^{-1}W_1 & 0& 0  \\ [0.5cm]
0&W_2(D_m-\lambda I)^{-1}W_2& 0 \\ [0.5cm]
0 & 0& W_3(D_m-\lambda I)^{-1}W_3 
\end{array}\right).
\end{equation} 
Namely,  according to Proposition ~\ref{integral}, we need to show that
\begin{eqnarray*}
    n_+(\tau\varepsilon^{-1}, X_\lambda^+)\sim\tau^{-2/p}\alpha^{2/p}J(\lambda,m),\quad\text{as }\alpha\to\infty,
\end{eqnarray*} for every $\tau>0$.

\subsection{Operator $W_1(D_m-\lambda I)^{-1}W_1$}

We start  the  investigation of  the first self-adjoint operator in \eqref{decomposition}, $W_1(D_m-\lambda I)^{-1}W_1$.

First, recall that, by \eqref{very important result}, the symbol $F(D_m-\lambda I)^{-1}F^*$ decays as $O(|\xi|^{-2})$ when $|\xi|\to\infty$. Hence, there exists a function $b\in L^\infty(\mathbb R^2)$ satisfying
\begin{eqnarray*}
    F(D_m-\lambda I)^{-1}F^*= \frac{b(\xi)}{1+|\xi|^2},
\end{eqnarray*} which implies the bound
\begin{eqnarray}
    \|W_1(D_m-\lambda I)^{-1}W_1\|_{\Sigma_1}&\le\|W_1F^*(|\xi|^2+1)^{-1/2}b(\xi)(|\xi|^2+1)^{-1/2}FW_1\|_{\Sigma_1}\nonumber\\
    \le&\|b\|_\infty\|W_1F^*(|\xi|^2+1)^{-\frac{1}{2}}\|_{\Sigma_2}\|(|\xi|^2+1)^{-\frac{1}{2}}FW_1\|_{\Sigma_2}\label{initial estimate 2}.
\end{eqnarray} On the other hand, based on Corollary ~\ref{corollary of three cases}, we obtain
\begin{eqnarray}
    &\|W_1F^*(|\xi|^2+1)^{-1/2}\|^2_{\Sigma_2}=\|W_1F^*(|\xi|^2+1)^{-1/2}(|\xi|^2+1)^{-1/2}FW_1\|_{\Sigma_1}\nonumber\\
    &=\|W_1F^*(|\xi|^2+1)^{-1}FW_1\|_{\Sigma_1}=\|W_1(-\Delta+I)^{-1}W_1\|_{\Sigma_1}\label{initial estimate 3}
    \le C_0[W_1^2]_1.
\end{eqnarray} Therefore, 
\begin{align*}
    &\|W_1(D_m-\lambda I)^{-1}W_1\|_{\Sigma_1}\le C_0\|b\|_\infty[W_1^2]_1=C_0\|b\|_\infty\sum_{n\in\mathbb Z^d}\|W_1^2\|_{L^q(\mathbb Q_n)}\\
    &\le C_{15}\sum_{n\in\mathbb Z^d,|n|<\varepsilon_1\alpha^{1/p}}\frac{1}{|n|^p+1}\le C_{16}\int_{|x|<\varepsilon_1\alpha^{1/p}}\frac{dx}{|x|^p+1}\\
    &\asymp C_{16}\int_0^{2\pi}\int_{r<\varepsilon_1\alpha^{1/p}}r^{1-p}\,dr\,d\theta=2\pi C_{16}\left[\frac{r^{2-p}}{2-p}\right]\Big|_{r=0}^{r=\varepsilon_1\alpha^{1/p}}=C_{17}\varepsilon_1^{2-p}\alpha^{\frac{2}{p}-1},
\end{align*} where $C_{15},C_{16}$ and $C_{17}$ are non-negative constants. This suggests that
\begin{eqnarray}
\label{result for the first self adjoint operator}
    n(\tau\alpha^{-1},W_1(D_m-\lambda I)^{-1}W_1)\le C_{18}\varepsilon_1^{2-p}\alpha^{2/p}\tau^{-1},\qquad\forall\tau>0,
\end{eqnarray} for some constant $C_{18}\ge0$. This concludes the analysis of the operator $W_1(D_m-\lambda I)^{-1}W_1$.

\subsection{Operator $W_3(D_m-\lambda I)^{-1}W_3$}

Let us show that 
\begin{align}
\label{tau}
    \|W_3(D_m-\lambda I)^{-1}W_3\|\le\tau\alpha^{-1},
\end{align} if $\varepsilon_2$ is sufficiently large. This would imply that
\begin{align}
\label{result for W3W3}
    n(\tau\alpha^{-1},W_3(D_m-\lambda I)^{-1}W_3)=0.
\end{align} Note that there is a constant $C\ge 0$ such that $|W_3(x)|$ is bounded from above by $C|x|^{-p/2}$. Consequently, 
\begin{eqnarray*}
    &\|W_3(D_m-\lambda I)^{-1}W_3\|\le\frac{1}{\text{dist}(\lambda,\sigma(D_m))}\|W_3^2\|_\infty=C_\lambda\varepsilon_2^{-p}\alpha^{-1},
\end{eqnarray*} with the constant
\begin{align*}
    C_\lambda=\frac{C}{\text{dist}(\lambda,\sigma(D_m))}.
\end{align*} Hence, if we choose $\varepsilon_2>(C_\lambda/\tau)^{1/p}$, then \eqref{tau} will hold. The analysis of the operator $W_3(D_m-\lambda I)^{-1}W_3$ is thereby completed.

\subsection{Operator $W_2(D_0-\lambda I)^{-1}W_2$}

We now consider the only remaining self-adjoint operator in \eqref{decomposition}, $W_2(D_0-\lambda I)^{-1}W_2$. To get started, we introduce the notation $\tilde\Omega_2=\{x\in\mathbb R^2:\varepsilon_1<|x|<\varepsilon_2\}$. Thus, $\Omega_2(\alpha)=\alpha^{1/p}\tilde\Omega_2$, which means  that one   set is obtained from the other  by scaling. Then we decompose $\tilde\Omega_2$ into finitely many sets denoted by $\{Q_j\}_{j=1}^l$, where $Q_j$ is a square for $1\le j\le l-1$ and $Q_l=\tilde\Omega_2\backslash\cup_{j=1}^{l-1}Q_j$. Evidently, each square $Q_j$ can be expressed in the form 
\begin{align}
    \delta([0,1)^2+n)
    \label{square}
\end{align} for some $\delta>0$ and $n\in\mathbb Z^2$  when $1\le j\le l-1$. Moreover, $\Omega_2(\alpha)=\alpha^{1/p}\tilde\Omega_2=\cup_{j=1}^l\alpha^{1/p}Q_j$. Employing the same methodology as in the proof of Proposition ~\ref{integral} and the relation \eqref{result of KF inequality}, we can show that
\begin{align}
\label{sum estimate}
    &n_+(\tau\alpha^{-1},W_2(D_m-\lambda I)^{-1}W_2)\nonumber\\&\sim\sum_{j=1}^l n_+(\tau\alpha^{-1},\chi_jW(D_m-\lambda I)^{-1}W\chi_j)+o(\alpha^{2/p}),\quad\text{as }\alpha\to\infty,
\end{align} where $\chi_j$ represents the characteristic function of set $\alpha^{1/p}Q_j$ for $1\le j\le l$. In order to compute the asymptotics of $n_+(\tau\alpha^{-1},W_2(D_m-\lambda I)^{-1}W_2)$, it suffices to analyze each individual term on the right-hand side of \eqref{sum estimate}.

Let $Q$ be a square of the form \eqref{square}, let $\phi_\beta$ be the characteristic function of the set $\beta Q$. We want to investigate the behavior of the quantity $n_+(\tau,\phi_\beta(D_m-\lambda I)^{-1}\phi_\beta)$ when the value of $\beta$ approaches infinity and $\tau>0$ is fixed. We intend  to calculate the value of  the limit $\lim_{\beta\to\infty}\beta^{-2}n_+(\tau,\phi_\beta(D_m-\lambda I)^{-1}\phi_\beta)$.

Note that for each self-adjoint operator $A$, we have $\chi_{(\tau,\infty)}(A)=E_A(\tau,\infty)$. Thus,
\begin{eqnarray*}
    n_+(\tau,A)=\text{tr}(E_A(\tau,\infty))=\text{tr}(\chi_{(\tau,\infty)}(A)),
\end{eqnarray*} if $A$ is compact and self-adjoint. 

We are going to prove the following important proposition.

\begin{proposition}\label{pr2.2}
For every fixed $\t>0$, 
$$
n_+\bigl(\tau, \phi_{\beta}(D_m-\l)^{-1}\phi_{\beta}\bigr)\sim (4\pi)^{-1}\beta^{2}\Bigl((\t^{-1}+\l)_+^2-m^2\Bigr)_+^{1/2} {\rm Area }\, Q, \quad \text{as }\beta\to\infty,
$$ where $x_+=\max(0,x)$ for all $x\in\mathbb R$.
\end{proposition}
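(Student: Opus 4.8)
The plan is to compute the limit $\lim_{\beta\to\infty}\beta^{-2}n_+(\tau,\phi_\beta(D_m-\lambda)^{-1}\phi_\beta)$ by a scaling argument that reduces the question to a fixed operator on a fixed square, and then to identify the limit as a phase-space volume. First I would rescale: introduce the unitary dilation $(U_\beta u)(x)=\beta\,u(\beta x)$ on $L^2(\mathbb R^2,\mathbb C^2)$, so that $\phi_\beta = U_\beta^*\,\phi_Q\,U_\beta$ where $\phi_Q$ is the characteristic function of $Q$. Because $D_m$ contains second-order derivatives and a zero-order term $m\sigma_3$, conjugating by $U_\beta$ sends $D_m$ to $\beta^{-2}D_0 + m\sigma_3$, where $D_0$ is the $m=0$ operator. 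Hence $\phi_\beta(D_m-\lambda)^{-1}\phi_\beta$ is unitarily equivalent to $\phi_Q(\beta^{-2}D_0 + m\sigma_3 - \lambda)^{-1}\phi_Q = \beta^2\,\phi_Q(D_0 + \beta^2(m\sigma_3-\lambda))^{-1}\phi_Q$. So I must analyze $n_+\bigl(\tau,\ \beta^2\,\phi_Q(D_0+\beta^2(m\sigma_3-\lambda))^{-1}\phi_Q\bigr) = n_+\bigl(\tau\beta^{-2},\ \phi_Q(D_0+\beta^2(m\sigma_3-\lambda))^{-1}\phi_Q\bigr)$ as $\beta\to\infty$; equivalently, setting $R=\beta^2$, I study $n_+\bigl(\tau/R,\ \phi_Q(D_0 + R(m\sigma_3-\lambda))^{-1}\phi_Q\bigr)$ as $R\to\infty$ and want it to be asymptotic to $(4\pi)^{-1}R\,((\tau^{-1}+\lambda)_+^2-m^2)_+^{1/2}\,\mathrm{Area}\,Q$.

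Next I would pass to the symbol. On the whole space, $F(D_0+R(m\sigma_3-\lambda))^{-1}F^*$ is the matrix multiplier $\hat g_R(\xi):=(\hat D_0(\xi)+R(m\sigma_3-\lambda))^{-1}$, whose eigenvalues are $(R(m-\lambda)\pm|\xi|^2\text{-coupling})$-type expressions; explicitly the two eigenvalues of $\hat D_0(\xi)+R(m\sigma_3-\lambda)$ are $-R\lambda\pm\sqrt{R^2m^2+|\xi|^4}$, so the eigenvalues of $\hat g_R(\xi)$ are $(-R\lambda\pm\sqrt{R^2m^2+|\xi|^4})^{-1}$. The positive eigenvalue exceeds $\tau/R$ precisely when $-R\lambda+\sqrt{R^2m^2+|\xi|^4}$ lies in $(0, R/\tau)$, i.e. when $|\xi|^4 < R^2\bigl((\tau^{-1}+\lambda)^2 - m^2\bigr)$ and $\tau^{-1}+\lambda>0$ — this is exactly the region whose $\xi$-area is $\pi R\,((\tau^{-1}+\lambda)_+^2-m^2)_+^{1/2}$, matching the claimed constant after dividing by $(2\pi)^2$ and multiplying by $\mathrm{Area}\,Q$. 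To turn this heuristic into a proof I would invoke the same machinery used earlier in the paper for Theorem~\ref{first theorem}: a Birman-type result (Theorem~2 of \cite{BS}, already cited in the excerpt) giving $n_+(s,W f(-i\nabla)W)\sim (2\pi)^{-d}\iint n_+(1, s^{-1}|W(x)|^2\text{-symbol})\,dx\,d\xi$ — here with $W=\phi_Q$, but since $\phi_Q$ is the indicator of a bounded set the Birman framework applies after checking the relevant Schatten-class bounds, which follow from Corollary~\ref{corollary of three cases} and Proposition~\ref{Holder's inequality} exactly as before. Alternatively, and perhaps more cleanly, I would argue directly with Dirichlet–Neumann-free localization on the single square $Q$: approximate $(D_0+R(m\sigma_3-\lambda))^{-1}$ near $Q$ by the constant-coefficient resolvent and control the error in $\Sigma_1$ using that the resolvent symbol is $O(|\xi|^{-2})$ and $\phi_Q\in L^\infty$ with bounded support.

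I expect the main obstacle to be the uniformity of the asymptotics in the large parameter $R=\beta^2$: the standard Birman/Weyl asymptotic $n_+(s,X)\sim s^{-1}(\text{phase-space integral})$ is a statement as $s\to0$ for a \emph{fixed} operator $X$, whereas here the operator $\phi_Q(D_0+R(m\sigma_3-\lambda))^{-1}\phi_Q$ itself depends on $R$ while the threshold $\tau/R\to0$ simultaneously. The honest way around this is the scaling identity above: after conjugating back by $U_\beta$, the operator $\phi_\beta(D_m-\lambda)^{-1}\phi_\beta$ with the \emph{fixed} threshold $\tau$ is genuinely a single family indexed by $\beta$, and one applies the known Weyl-type asymptotics with the semiclassical parameter being the size of the square $\beta Q$. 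So the real technical content is a Weyl law for $\phi_{\beta Q}(D_m-\lambda)^{-1}\phi_{\beta Q}$ as the region $\beta Q$ grows — a large-domain (rather than small-coupling) asymptotic — which again reduces via Theorem~2 of \cite{BS} to integrating $n_+(\tau^{-1},\hat g_1(\xi))$ over $\beta Q\times\mathbb R^2$, yielding the factor $\mathrm{Area}(\beta Q)=\beta^2\,\mathrm{Area}\,Q$ times the $\xi$-integral computed above. The secondary obstacle is verifying that replacing the sharp cutoff region $\beta Q$ and the resolvent by their "bulk" model contributes only $o(\beta^2)$; this is handled by the same commutator-and-Schatten-norm estimates already deployed for the off-diagonal operators $W_i(D_m-\lambda)^{-1}W_j$ in the previous subsections, so no new ideas are needed there.
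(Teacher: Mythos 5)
Your scaling identity is correct ($\phi_\beta(D_m-\lambda)^{-1}\phi_\beta$ is unitarily equivalent to $\beta^{2}\phi_Q\bigl(D_0+\beta^{2}(m\sigma_3-\lambda)\bigr)^{-1}\phi_Q$), and your symbol computation correctly identifies the constant: the set $\{\xi:(\sqrt{|\xi|^4+m^2}-\lambda)^{-1}>\tau\}$ has area $\pi\bigl((\tau^{-1}+\lambda)_+^2-m^2\bigr)_+^{1/2}$, which after multiplying by $(2\pi)^{-2}\,\mathrm{Area}(\beta Q)$ gives the claimed right-hand side. But the central step of the proof is missing. Theorem 2 of \cite{BS}, in the form used for Theorem 1.1, gives the Weyl asymptotics of $n_+(s,Wg(-i\nabla)W)$ as $s\to 0$ for a \emph{fixed} weight $W$; here the threshold $\tau$ is fixed and the weight $\phi_\beta$ is the indicator of a \emph{growing} square, so that theorem simply does not apply. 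You noticed this yourself, but your resolution is circular: after conjugating back you assert that the needed large-domain Weyl law ``again reduces via Theorem 2 of \cite{BS} to integrating over $\beta Q\times\mathbb R^2$'' --- yet that fixed-threshold, growing-domain asymptotics is precisely the statement to be proved, not a consequence of the cited result (the resolvent symbol is not homogeneous, because of $m$ and $\lambda$, so no rescaling turns the problem into a small-$s$ asymptotics for a fixed operator). Nor is it handled by ``the same commutator-and-Schatten-norm estimates already deployed for the off-diagonal operators'': those only produce $o(\alpha^{2/p})$ upper bounds for cross terms, whereas here one needs an asymptotic \emph{equality} for the diagonal piece.

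What actually fills this gap in the paper is a Szeg\H{o}-type trace comparison: one proves
${\rm tr}\,\Phi\bigl(\phi_\beta R_\lambda\phi_\beta\bigr)\sim{\rm tr}\,\bigl[\phi_\beta\,\Phi(R_\lambda)\,\phi_\beta\bigr]$ as $\beta\to\infty$, where $R_\lambda=(D_m-\lambda I)^{-1}$ and $\Phi=\chi_{(\tau,\infty)}$, and then evaluates the right-hand side through the symbol. This is first established for $\Phi(s)=s^{k}$ by writing $\phi_\beta R_\lambda^{k}\phi_\beta-(\phi_\beta R_\lambda\phi_\beta)^{k}$ in terms of $\phi_\beta R_\lambda(1-\phi_\beta)$ and splitting the latter as $T_1+T_2$, where $T_2$ is localized in a boundary layer of width $\beta^{1/2}$ around $\beta\,\partial Q$ (so $\|T_2\|_{\mathfrak{S}_k}=o(\beta^{2/k})$), while $T_1$ has small operator norm because the integral kernel of $R_\lambda$ decays exponentially off the diagonal (Schur test); this yields an $o(\beta^{2})$ bound in $\mathfrak{S}_1$. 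Then one passes from powers to general $\Phi$ by writing $\Phi(s)=s^{2}\eta(s)$, approximating $\eta$ uniformly by polynomials on the spectrum $[-L,L]$ of $R_\lambda$ with the error controlled by $C\beta^{2}\|\eta\|_\infty$, and finally squeezes the indicator $\chi_{(\tau,\infty)}$ between continuous functions $\Phi_\varepsilon$. Without an argument of this kind, your proposal establishes the expected formula only heuristically; I would suggest you either reproduce such a trace-comparison argument or cite a genuine Szeg\H{o}/large-domain Weyl theorem applicable to the matrix-valued, non-homogeneous symbol at hand.
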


\begin{proof} First, note that
$$
\pi\Bigl((\t^{-1}+\l)_+^2-m^2\Bigr)_+^{1/2}={\rm area}\,\{\xi\in {\Bbb R}^2:\,\,\,\,\,   \bigl(\sqrt{|\xi|^4+m^2}-\l\bigr)^{-1}>\t\}.
$$
Based on the fact that $\pm \sqrt{|\xi|^4+m^2}$ are eigenvalues of the symbol 
$$\hat D_m(\xi):=\begin{pmatrix} m & (\xi_2-i\xi_1)^2 \\
(\xi_2+i\xi_1)^2 & -m
\end{pmatrix}$$ we find
\[\label{2.1}
{\rm tr}\, \Phi \bigl(\phi_{\beta}(D_m-\l I)^{-1}\phi_{\beta}\bigr)\sim {\rm tr}\,\Bigl[\phi_{\beta}\Phi \bigl((D_m-\l I)^{-1}\bigr)\phi_{\beta}\Bigr],\quad\text{as } \beta\to\infty,
\]
for $\Phi$ being the characteristic function of the interval $(\t,\infty)$. It is clear that such a function $\Phi$ could be squeezed between two comparable functions of the form
$$\Phi_\varepsilon(s)= \begin{cases} 0, \quad &\text{if }s<s_0\\
(s-s_0)/\varepsilon; \quad&\text{if } s_0 \leq s\leq s_0+\varepsilon;\\1, \quad&\text{if } s>s_0+\varepsilon.
 \end{cases}$$
Namely,
\[\notag
\Phi_\varepsilon(s)\leq \Phi(s)\leq \Phi_\varepsilon(s+\varepsilon).
\]
Therefore, if \eqref{2.1} holds with $\Phi$ replaced by the functions $\Phi_\varepsilon(s)$ and $\Phi_\varepsilon(s+\varepsilon)$, then
\[\notag
\Bigl(((\t+\varepsilon)^{-1}+\l)_+^2-m^2\Bigr)_+^{1/2}\leq
\lim_{\beta\to\infty}\beta^{-2}
{\rm tr}\, \Phi \bigl(\phi_{\beta}(D_m-\l I)^{-1}\phi_{\beta}\bigr)\leq \Bigl(((\t-\varepsilon)^{-1}+\l)_+^2-m^2\Bigr)_+^{1/2}.
\]
Since $\varepsilon>0$ is arbitrary, the latter inequality would result in \eqref{2.1}. Thus, we only have to prove \eqref{2.1} for functions $\Phi$ that are continuous and vanish near the origin.

Every such function $\Phi$ may be rewritten as
$$
\Phi(s)=s^{2}\eta(s),
$$
where $\eta$ is a continuous function on the real line ${\Bbb R}$. Notice that, in this situation,
$$
\Bigl|{\rm tr}\, \Phi \bigl(\phi_{\beta}(D_m-\l I)^{-1}\phi_{\beta}\bigr)\Bigr|\leq \| \phi_{\beta}(D_m-\l I)^{-1}\phi_{\beta}\|_{{\frak S}_2}^2 \| \eta\|_\infty\leq C_{20}\beta^{2}\| \eta\|_\infty,
$$ for some constant $C_{20}\ge0$. By $\Phi((D_m-\l I)^{-1})=(D_m-\l I)^{-1}\eta(R_\l I)(D_m-\l I)^{-1}$, we derive that
$$
\Bigl|{\rm tr}\, \phi_{\beta}\Phi \bigl((D_m-\l I)^{-1}\bigr)\phi_{\beta}\Bigr|\leq \| \phi_{\beta}(D_m-\l I)^{-1}\|_{{\frak  S}_{2}}  \| (D_m-\l I)^{-1}\phi_{\beta}\|_{{\frak  S}_{2}}  \| \eta\|_\infty\leq C_{20}\beta^{2}\| \eta\|_\infty.
$$
Consequently, both sides of \eqref{2.1} may be estimated by $C_{20}\beta^{2}\|\eta\|_\infty$, allowing one to assume that $\eta$ is a polynomial.

Indeed, functions of a given self-adjoint operator only need to be defined on the spectrum of the operator. On the other hand, the spectrum of operator $(D_m-\l I)^{-1}$ is contained in $[-L,L]$, where $L=1/(m-|\l|)$.
As a result, the functional $\|\eta\|_\infty$ in the last inequality is the $L^\infty$-norm of the function on the compact interval $[-L,L]$. Since on a finite interval, $\eta$ can be uniformly approximated by polynomials, it suffices to prove \eqref{2.1} under the assumption that $\eta$ is a polynomial. Put differently, it is enough to show that \eqref{2.1} holds for
$$
\Phi(s)=s^{k},\qquad k\geq2,
$$
by the fact that all polynomials are finite linear combinations of power functions.

Denote $R_\l=(D_m-\l I)^{-1}$,  $\chi_+=\phi_{\beta}$ and $\chi_-=1-\phi_{\beta}.$
We intend to show that
\[\label{2.2}
\|(\chi_+R_\l\chi_+)^k-\chi_+R_\l^k\chi_+\|_{{\frak S}_1}=o(\beta^{2}),\qquad \text{ as} \quad \beta\to\infty.
\]
For that reason, we represent $\chi_+R_\l^k\chi_+$ as
\[\label{2.3}
\chi_+R_\l^k\chi_+=(\chi_+R_\l\chi_+)^k+\sum_{j=0}^{k-1} (\chi_+R_\l\chi_+)^j\chi_+R_\l\chi_-R_\l^{k-j-1}\chi_+.
\]
While the norm of the operator $\chi_+R_\l\chi_-$ does not approach zero, it is still representable in the form
$$
\chi_+R_\l\chi_-= T_1+T_2,\qquad \text{where } \|T_1\|\to 0,  \|T_2\|_{{\frak S}_{k}}=o(\beta^{2/k}),\quad \text{as } \beta\to\infty.
$$ Also, we define $T_2$ as
$$
T_2=\Theta\chi_+R_\l\chi_-\Theta,
$$
where $\Theta$ is the operator of multiplication by the characteristic  function of the layer
\[
\label{layer1}
\{x\in {\Bbb R}^3:\quad {\rm dist}(x,\beta \partial Q)<\beta^{1/2}\}.
\]
Then the area of the support of $\Theta$ does not exceed $C\beta^{3/2}$ at least for sufficiently large values of $\beta$. Thus,
$$
 \|T_2\|_{{\frak S}_{k}}\leq C\beta^{\frac{3}{2k} }=o(\beta^{2/k}),\quad \text{as}\quad \beta\to\infty.
$$
In contrast, since the explicit expression  for the integral kernel of  $(D_m-\l I)^{-1}$ shows that the latter decays exponentially fast when $|x-y|\to\infty$, we conclude the following estimate for the integral kernel $k(x,y)$ of the operator $T_1$:
\[
|k(x,y)|\leq C \bigl((1-\Theta(x))+(1-\Theta(y))\bigr)e^{-c|x-y|}\chi_+(x)\chi_-(y).
\]
Then combining the celebrated Shur estimate
$$
\|T_1\|\leq \left(\sup_x \int |k(x,y)|dy\times \sup_y \int | k(x,y)|dx\right)^{1/2}
$$  with the observation that $x$ and $y$ for which $k(x,y)\neq 0$ are distance  $\beta^{1/2}$ apart, we obtain
the inequality
$$
\|T_1\|\leq 2\pi C \int_{r>\beta^{1/2}} e^{-cr}rdr,
$$
which entails that $ \|T_1\|\to 0$ as $\beta\to\infty$.

Therefore, we establish the following result:
\[\notag \begin{split}
\|(\chi_+R_\l\chi_+)^j\chi_+R_\l\chi_-R_\l^{k-j-1}\chi_+\|_{{\frak S}_1}\leq \|(\chi_+R_\l\chi_+)^jT_1R_\l^{k-j-1}\chi_+\|_{{\frak S}_1}+\\
\|(\chi_+R_\l\chi_+)^jT_2R_\l^{k-j-1}\chi_+\|_{{\frak S}_1} \leq \|\chi_+R_\l\chi_+\|^j_{{\frak S}_{k-1}} \|T_1\| \|R_\l^{k-j-1}\chi_+\|_{{\frak S}_{(k-1)/(k-j-1)}}\\
+\| \chi_+R_\l\chi_+\|^j_{{\frak S}_{k}}
\|T_2\|_{{\frak S}_{k}}\|R_\l^{k-j-1}\chi_+\|_{{\frak S}_{k/(k-j-1)}}
=o(\beta^{2}),\qquad \text{as } \beta\to\infty.
\end{split}
\]
Substituting this into \eqref{2.3}, we establish \eqref{2.2}. The proof is complete.
\end{proof}

Observe that Proposition ~\ref{pr2.2} implies that
\begin{eqnarray*}
    n_+(\tau,\chi_j(D_m-\lambda I)^{-1}\chi_j)\sim\frac{\beta^2}{4\pi}[((\lambda+\tau^{-1})_+^2-m^2)_+]^{1/2}\text{Area }Q_j.
\end{eqnarray*} Note also that if $\delta>0$ is sufficiently small, then $\Psi(\theta)|x|^{-p}$ can be well approximated by a constant function on the cube $\delta([0,1)^2+n),\forall n\in\mathbb Z^2$. Therefore, we first focus on the case where $V(x)$ is strictly (rather than asymptotically) equal to $\Psi(\theta)|x|^{-p}$, and introduce two points $x_j^+$ and $x_j^-$ in $Q_j$ such that
\begin{eqnarray*}
    V(x_j^+)=\max_{x\in Q_j}V(x)\qquad\text{and }\qquad V(x_j^-)=\min_{x\in Q_j}V(x),
\end{eqnarray*} respectively. Let $x\in\alpha^{1/p}Q_j$, then $\alpha V(x)=\alpha\Psi(\theta)|x|^{-p}=\Psi(\theta)|x\alpha^{-1/p}|^{-p}$. Thus,  $V(x_j^-)\le\alpha V(x)\le V(x_j^+), \forall x\in\alpha^{1/p}Q_j$. Taking the square root on  all sides of this inequality, we obtain $W(x_j^-)\le\sqrt\alpha W(x)\le W(x_j^+)$. Using the monotonicity of $n_+(\t\alpha^{-1},\chi_jW(D_m-\lambda I)^{-1}W\chi_j)$ with respect to $W$, we get
\begin{eqnarray*}
    &&n_+(\t\alpha^{-1},\chi_jW(D_m-\lambda I)^{-1}W\chi_j)=n_+(\t,\sqrt\alpha\chi_jW(D_m-\lambda I)^{-1}W\chi_j\sqrt\alpha)\\
    &&\le n_+(\t,W(x_j^+)\chi_j(D_m-\lambda I)^{-1}\chi_jW(x_j^+))=n_+(\t,V(x_j^+)\chi_j(D_m-\lambda I)^{-1}\chi_j)\\
    &&=n_+\Big(\frac{\t}{ V(x_j^+)},\chi_j(D_m-\lambda I)^{-1}\chi_j\Big)\sim\frac{\beta^2}{4\pi}[((\lambda+\t^{-1}V(x_j^+))^2-m^2)_+]^{1/2}\text{Area }Q_j.
\end{eqnarray*} Consequently, 
\begin{align*}
    \limsup_{\alpha\to\infty}\alpha^{-2/p}N_\tau(\lambda,\alpha)=\limsup_{\alpha\to\infty}\sum_{j=1}^ln_+(\t\alpha^{-1},\chi_jW(D_m-\lambda I)^{-1}W\chi_j)\\
    \le\frac{1}{4\pi}\sum_{j=1}^l[((\lambda+\t^{-1}V(x_j^+))_+^2-m^2)_+]^{1/2}\text{vol}(Q_j),
\end{align*} where $N_\tau(\lambda,\alpha)=n_+(\t\alpha^{-1},W_2(D_m-\lambda I)^{-1}W_2)$. This leads to the following estimate:
\begin{align}
    \limsup_{\alpha\to\infty}\alpha^{-2/p}N_\tau(\lambda,\alpha)\le\frac{1}{4\pi}\int_{\varepsilon_1<|x|<\varepsilon_2}[((\lambda+\t^{-1}\Psi(\theta)|x|^{-p})^2_+-m^2)_+]^{1/2}\,dx.\label{one side}
\end{align} Similarly,
\begin{align}
    \liminf_{\alpha\to\infty}\alpha^{-2/p}N_\tau(\lambda,\alpha)=\liminf_{\alpha\to\infty}\sum_{j=1}^ln_+(\t\alpha^{-1},\chi_jW(D_m-\lambda I)^{-1}W\chi_j)\nonumber\\
    \ge\frac{1}{4\pi}\int_{\varepsilon_1<|x|<\varepsilon_2}[((\lambda+\t^{-1}\Psi(\theta)|x|^{-p})_+^2-m^2)_+]^{1/2}\,dx.\label{another side}
\end{align} Synthesizing inequalities \eqref{one side} and \eqref{another side}, we obtain
\begin{eqnarray}
\label{result for N2}
    \lim_{\alpha\to\infty}\alpha^{-2/p}N_\tau(\lambda,\alpha)=\frac{1}{4\pi}\int_{\varepsilon_1<|x|<\varepsilon_2}[((\lambda+\t^{-1}\Psi(\theta)|x|^{-p})_+^2-m^2)_+]^{1/2}\,dx.
\end{eqnarray} 
This  completes  the analysis of the operator $W_2(D_m-\lambda I)^{-1}W_2$.

\bigskip

\bigskip

{\it The end of the proof of Theorem ~\ref{second theorem}}.
By \eqref{result for the first self adjoint operator}, \eqref{result for W3W3} and \eqref{result for N2}, we deduce
\begin{align}
\label{long estimate}
    \limsup_{\alpha\to\infty}\alpha^{\frac{2}{p}}N(\lambda,\alpha)\leq C_{18}\varepsilon_1^{2-p}\t^{-1}+\frac{1}{4\pi}\int_{\varepsilon_1<|x|<\varepsilon_2}[((\lambda+ \t^{-1}\Psi(\theta)|x|^{-p})_+^2-m^2)_+]^{1/2}\,dx
\end{align} for sufficiently large $\varepsilon_2$ and any $\t\in(0,1)$. Furthermore, if
\begin{eqnarray*}
    \lambda+ \t^{-1}\|\Psi\|_\infty\varepsilon_2^{-p}<m,
\end{eqnarray*} or equivalently,
\begin{eqnarray*}
    \varepsilon_2>\left(\frac{\t^{-1}\|\Psi\|_\infty}{m-\lambda}\right)^{1/p},
\end{eqnarray*} then
\begin{eqnarray*}
    \int_{|x|>\varepsilon_2}[((\lambda+\t^{-1}\Psi(\theta)|x|^{-p})_+^2-m^2)_+]^{1/2}\,dx=0.
\end{eqnarray*} Taking the limit as $\varepsilon_1\to0$, we get
\begin{eqnarray*}
    \limsup_{\alpha\to\infty}\alpha^{-2/p}N(\lambda,\alpha)&\leq&\frac{1}{4\pi}\int_{\mathbb R^2}[((\lambda+\t^{-1}\Psi(\theta)|x|^{-p})_+^2-m^2)_+]^{1/2}\,dx\\
    &=&\frac{1}{4\pi\t^{2/p}}\int_{\mathbb R^2}[((\lambda+\Psi(\theta)|x|^{-p})_+^2-m^2)_+]^{1/2}\,dx.
\end{eqnarray*} Note that the integral on the right-hand side converges for $0<p<2$. Taking the limit as $\t\to1$, we obtain
\begin{eqnarray}
\label{upper limit}
    \limsup_{\alpha\to\infty}\alpha^{-2/p}N(\lambda,\alpha)\le\frac{1}{4\pi}\int_{\mathbb R^2}[((\lambda+\Psi(\theta)|x|^{-p})_+^2-m^2)_+]^{1/2}\,dx.
\end{eqnarray}
Also, for any $\t>1$,
\begin{eqnarray*}
    \liminf_{\alpha\to\infty}N(\lambda,\alpha)\ge\liminf_{\alpha\to\infty}\alpha^{-2/p} N_\tau(\lambda,\alpha).
\end{eqnarray*} Repeating the same steps as for the upper limit, we infer that 
\begin{eqnarray}
\label{lower limit}
    \liminf_{\alpha\to\infty}\alpha^{-2/p}N(\lambda,\alpha)\geq\frac{1}{4\pi}\int_{\mathbb R^2}[((\lambda+\Psi(\theta)|x|^{-p})_+^2-m^2)_+]^{1/2}\,dx.
\end{eqnarray} Combining \eqref{upper limit} and \eqref{lower limit}, we derive the following formula
\begin{eqnarray*}
    N(\lambda,\alpha)\sim\frac{\alpha^{2/p}}{4\pi}\int_{\mathbb R^2}[((\lambda+\Psi(\theta)|x|^{-p})_+^2-m^2)_+]^{1/2}\,dx,\quad\text{as }\alpha\to\infty,
\end{eqnarray*} which finalizes the proof of Theorem ~\ref{second theorem}.$\,\,\,\blacksquare$


\begin{thebibliography}{17}

\bibitem{ADH} S. Alama; P. Deift, and R. Hempel: \textit{Eigenvalue branches of the Schrödinger operator $H-\lambda W$ in a gap of $\sigma(H)$.} Comm. Math. Phys. \textbf{121} (1989), no. 2, 291–321.

\bibitem{B} M. Birman: \textit{Discrete spectrum in the gaps of a continuous one for perturbations with large coupling constant}, Adv. Sov. Math. {\bf 7} (1991): 57-73.

\bibitem{BS} M. Birman, M. Solomyak: \textit{Estimates of singular numbers of integral operators}.
Adv. Math. Sci. 32(\textbf{1}) (1977): 17–84.

\bibitem{DH} P. Deift and R. Hempel: \textit{On the existence of eigenvalues of the Schrödinger operator $H-\lambda W$ in a gap of $\sigma(H)$.} Comm. Math. Phys. \textbf{103} (1986), 461–490.

\bibitem{H} R. Hempel: \textit{On the asymptotic distribution of the eigenvalue branches of the Schrödinger operator $H\pm\lambda W$ in a spectral gap of $H$}. J. Reine Angew. Math. \textbf{399} (1989), 38–59.

\bibitem{H2} R. Hempel:
\textit{Eigenvalues in gaps and decoupling by Neumann boundary conditions.} J. Math. Anal. Appl. \textbf{169}(1992), no. 1, 229–259.

\bibitem{S} O. Safronov:
\textit{The discrete spectrum in gaps of the continuous spectrum for indefinite-sign perturbations with a large coupling constant.} Algebra i Analiz \textbf{8} (1996), no. 2, 162–194

\bibitem{SLF} O. Safronov, A. Laptev, and F. Ferrulli: \textit{Eigenvalues of the bilayer graphene operator with a complex valued potential}. Analysis and Mathematical Physics \textbf{9} (2019): 1535-1546.

\end{thebibliography}
\end{document}